\tikzset{ext/.style={circle, draw,inner sep=1pt},int/.style={circle,draw,fill,inner sep=1pt},nil/.style={inner sep=1pt}}
\tikzset{exte/.style={circle, draw,inner sep=1pt},inte/.style={circle,draw,fill,inner sep=3pt}}
\tikzset{diagram/.style={matrix of math nodes, row sep=1.5em, column sep=0.5em, text height=1.5ex, text depth=0.25ex}}
\tikzset{diagram2/.style={matrix of math nodes, row sep=0.5em, column sep=0.5em, text height=1.5ex, text depth=0.25ex}}
\newcommand{\aor}{\rcirclearrowleft}
\newcommand{\aol}{\lcirclearrowright}
\newcommand{\cP}{{\mathcal{P}}}
\newcommand{\cQ}{{\mathcal{Q}}}
\newcommand{\cM}{{{M}}}
\newcommand{\Br}{{\mathsf{Br}}}
\newcommand{\FM}{{\mathsf{FM}}}
\newcommand{\e}{{\mathsf{e}}}
\newcommand{\bbo}{{\mathbbm{1}}}
\newcommand{\K}{{\mathbb{K}}}
\newcommand{\R}{{\mathbb{R}}}
\newcommand{\bbS}{{\mathbb{S}}}
\newcommand{\mF}{{\mathcal{F}}}
\DeclareMathOperator{\im}{Im}
\DeclareMathOperator{\id}{id}
\DeclareMathOperator{\Free}{Free}
\DeclareMathOperator{\End}{End}
\DeclareMathOperator{\Hom}{Hom}
\newtheorem{theorem}{Theorem}[]
\newtheorem{corollary}[theorem]{Corollary}
\newtheorem{lemma}[theorem]{Lemma}
\newtheorem{proposition}[theorem]{Proposition}
\newtheorem{prop-def}[theorem]{Proposition-definition}
\newtheorem{remark}[theorem]{Remark}
 \newtheorem{defi}{Definition}
\begin{document}

\title{Operadic Torsors}

\author{Ricardo Campos}
\address{Institute of Mathematics\\ University of Zurich\\ Winterthurerstrasse 190 \\ 8057 Zurich, Switzerland}
\email{ricardo.campos@math.uzh.ch}

\author{Thomas Willwacher}
\address{Institute of Mathematics\\ University of Zurich\\ Winterthurerstrasse 190 \\ 8057 Zurich, Switzerland}
\email{thomas.willwacher@math.uzh.ch}

\thanks{This work was partially supported by the Swiss National Science Foundation, grant 200021\_150012 and the NCCR SwissMAP funded by the Swiss National Science Foundation.}

\begin{abstract}
We introduce the notion of operadic torsors and operadic quasi-torsors.
We show that if an operadic (quasi-)torsor between two operads exists, then these operads are (quasi-)isomorphic.
As an application we present the (arguably) shortest known proof of the Deligne conjecture.
\end{abstract}

\maketitle

\section{Introduction}
Suppose $G$ and $H$ are groups. Then a $G$-$H$ torsor is a set $M$ with a free and transitive action of $G$ from the left and a compatible free and transitive action of $H$ from the right.
It is an elementary exercise to check that if a $G$-$H$ torsor exists, then the groups $G$ and $H$ are isomorphic.
In fact, showing the existence of a torsor is a neat trick to show the isomorphism of two groups without actually constructing an isomorphism.

The purpose of this paper is to show that the above notions, and the trick, carry over to the operadic setting.
To this end we make the following definition.
\begin{defi}\label{def:optorsor}
Let $\mathcal P$ and $\mathcal Q$ be two differential graded operads and let $M$ be a $\mathcal{P}-\mathcal Q$ operadic differential graded bimodule, i.e., there are compatible actions $$\mathcal P \aol M \aor \mathcal Q.$$
We say that $\cM$ is a $\mathcal P$-$\mathcal Q$ torsor if there is an element $\mathbf 1 \in M^0(1)$ such that the canonical maps
\begin{equation}\label{equ:modmaps}
\begin{aligned}
 \cP &\to \cM  \quad\quad\quad\quad\quad & \cQ & \to \cM \\
 p   &\mapsto p\circ (\mathbf 1,\dots, \mathbf 1)  & q &\mapsto \mathbf 1\circ q \\
\end{aligned}
\end{equation}
are isomorphisms. We similarly say that $\cM$ is a quasi-torsor if the maps \eqref{equ:modmaps} are quasi-isomorphisms.
\end{defi}

The above definition connects to the notion of torsor over groups as follows. Suppose $G$ and $H$ are groups and $M$ is a $G$-$H$-torsor.
Then we may consider the group algebras $\K[G]$, $\K[H]$ as operads with only unary operations, and the bimodule $\K[M]$ is an operadic torsor in the sense of our definition.

It is an elementary exercise to check that if an operadic $\mathcal P$-$\mathcal Q$-torsor exists, then the operads $\cP$ and $\cQ$ are isomorphic.
The main result of this paper is the less elementary assertion that the analogous result for quasi-torsors also holds.

\begin{theorem}\label{thm:main}
Let $\cP$ and $\cQ$ be differential graded operads. Then an operadic $\cP$-$\cQ$-quasi-torsor exists if and only if $\cP$ and $\cQ$ are quasi-isomorphic operads.
\end{theorem}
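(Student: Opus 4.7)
For the easy direction, a direct operad quasi-isomorphism $f\colon \cP \to \cQ$ makes $\cQ$ itself into a $\cP$-$\cQ$-quasi-torsor (with $\cP$ acting on the left through $f$, $\cQ$ acting on the right by self-composition, and the distinguished element being the operadic unit $\bbo_{\cQ}\in \cQ(1)$); the two canonical maps become $f$ and $\id_{\cQ}$. For a general zig-zag of operad quasi-isomorphisms I would either replace it by a single direct morphism using a cofibrant/fibrant replacement in operads, or iterate the basic construction using the relative composition product $\cM \circ_{\mathcal R} \mathcal N$ of operadic bimodules, which splices quasi-torsors along a common middle operad $\mathcal R$ provided suitable cofibrancy is arranged.

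The hard direction, which is the real content, I would reorganise by encoding all the data as a single \emph{two-coloured} dg operad $\mathcal C$ on colours $\{p,q\}$ with $\mathcal C(p^n;p)=\cP(n)$, $\mathcal C(q^n;q)=\cQ(n)$, $\mathcal C(q^n;p)=\cM(n)$ and zero on all other colour profiles, the operadic composition being induced by the two actions of the bimodule and the operadic units of $\cP$ and $\cQ$. In this language the distinguished element $\bbo\in\cM(1)=\mathcal C(q;p)$ is an arity-one colour-changing operation, and the quasi-torsor hypothesis is precisely the statement that $\bbo$ is a ``quasi-isomorphism between the two colours'' of $\mathcal C$.

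The plan is then to collapse the two colours of $\mathcal C$ up to quasi-isomorphism. I would freely adjoin to $\mathcal C$ a formal inverse $\bbo^{-1}\in \mathcal C'(p;q)$ together with degree-$(-1)$ homotopies realising $\bbo\circ \bbo^{-1}\sim \id_p$ and $\bbo^{-1}\circ \bbo \sim \id_q$ as differentials, producing a derived localisation $\mathcal C\to \mathcal C'$, and then strictly identify the two colours of $\mathcal C'$ to obtain a single-coloured dg operad $\bar{\mathcal R}$. By construction the composites $\cP\to\mathcal C\to \bar{\mathcal R}$ and $\cQ\to\mathcal C\to \bar{\mathcal R}$ are operad morphisms, so the desired zig-zag $\cP\xrightarrow{\sim}\bar{\mathcal R}\xleftarrow{\sim}\cQ$ follows once these two composites are proved to be quasi-isomorphisms.

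The main obstacle is the invariance lemma underlying this plan: adjoining a formal homotopy inverse to an arity-one operation that is already a quasi-iso between colours does not alter the quasi-isomorphism type of the coloured operad. I expect to attack this by filtering $\mathcal C'$ by the number of occurrences of $\bbo^{-1}$ in a tree monomial and analysing the associated graded using only the quasi-iso property of $\bbo$ to build an explicit contracting homotopy on the quotient $\mathcal C'/\mathcal C$. An equivalent route, closer in spirit to Koszul duality, is to construct an $\infty$-morphism $\cP\to\cQ$ directly via the bar-cobar resolution of the coloured operad $\mathcal C$, where the Maurer-Cartan equation defining such a morphism is seeded in arity one by $\bbo$ and its higher components are produced inductively from the bimodule compositions encoded by $\cM$; either route requires careful bookkeeping of signs and of tree combinatorics.
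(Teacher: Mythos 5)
Your plan for the hard direction has a genuine gap, and it is located exactly where you place the ``main obstacle''. First, the framing is off: in the two-coloured operad $\mathcal C$ you define, all mixed colour profiles vanish (in particular $\mathcal C(p;q)=0$), so $\bbo$ is not, and cannot be, an equivalence between the colours inside $\mathcal C$; pre-composition with $\bbo$ in a \emph{single} input sends $\cP(n)=\mathcal C(p^n;p)$ to $\mathcal C(p^{n-1}q;p)=0$. The quasi-torsor hypothesis only says that substituting $\bbo$ into \emph{all} inputs simultaneously, and post-composing with $\bbo$, are quasi-isomorphisms. Hence the invariance statement you want (``formally inverting an operation that is already an equivalence does not change the quasi-isomorphism type'') is not of the standard localization-invariance kind, and more importantly, the construction it is meant to justify fails. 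Strictly identifying the two colours of $\mathcal C'$ is a colimit that is not homotopy invariant, and it breaks already in the motivating baby example $\cP=\cQ=\K$, $M=\K\bbo$ (the trivial group torsor): there $\mathcal C$ is the $\K$-linear ``interval'' category, and adjoining $\bbo^{-1}$ with homotopies and then collapsing the two objects produces a dg algebra $\bar{\mathcal R}$ with $H_0(\bar{\mathcal R})\cong\K[t,t^{-1}]$ generated by the class of $\bbo$ --- collapsing the two endpoints of an interval creates a circle --- so $\cP\to\bar{\mathcal R}$ is not a quasi-isomorphism. No filtration by occurrences of $\bbo^{-1}$ can repair this, because the statement to be proved about $\bar{\mathcal R}$ is simply false; one would additionally have to identify $\bbo$ with the unit, which is a non-homotopy-invariant choice at chain level and essentially amounts to constructing the comparison map directly. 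Your alternative bar-cobar route (an $\infty$-morphism $\cP\to\cQ$ ``seeded'' by $\bbo$) is only gestured at; making its inductive step work requires precisely a resolution of $M$ as a right $\cQ$-module together with a section, i.e.\ the real content of the theorem.

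For comparison, the paper avoids collapsing anything: it resolves $M$ by a bimodule $M_\infty$ that is quasi-free as a right $\cQ$-module, uses a lifting argument to produce a right-$\cQ$-linear retraction $\mu\colon M_\infty\to\cQ$ of the map $q\colon\cQ\to M_\infty$, and then compares the operads through the operad $\End_{\cQ}M_\infty$ of right-$\cQ$-equivariant endomorphisms, via the zig-zag $\cP\to\End_{\cQ}M_\infty\leftarrow\cQ$ (the left arrow coming from the bimodule structure, the right one from the pair $(q,\mu)$, with the quasi-isomorphism property checked by evaluating at the torsor element $\mathbf 1$). If you want to salvage your two-coloured picture, the analogue would be to compare the endomorphisms of the two colours in a suitable (co)fibrant replacement rather than to identify the colours strictly --- which, in effect, reproduces the paper's $\End_{\cQ}M_\infty$ argument. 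Your treatment of the easy direction (making $\cQ$ a $\cP$-$\cQ$-quasi-torsor along a direct quasi-isomorphism) is fine, though the extension to zig-zags via relative composition products still needs the cofibrancy hypotheses spelled out.
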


As an application one can give a very short and natural proof of the Deligne conjecture: One merely has to note that the (chains of the) configuration space of points in the upper half-plane form an operadic torsor between the braces operad and the configuration space of points in $\R^2$. We will make this application explicit in section \ref{sec:deligne}.

Theorem \ref{thm:main} settles the question of when an operadic quasi-torsor exists.
Our second main result shows that an operadic quasi-torsor is essentially unique, if it exists.
\begin{theorem}\label{thm:uniqueness}
 Let $\cP$ and $\cQ$ be differential graded operads and let $\cM$ be an operadic $\cP$-$\cQ$ quasi-torsor. 
 Then there is a zig-zag of quasi-isomorphisms of operads and bimodules connecting the actions
  $$\mathcal P \aol \cM \aor \mathcal Q$$
  to the canonical operadic torsor
  $$\mathcal Q \aol \cQ \aor \mathcal Q.$$
\end{theorem}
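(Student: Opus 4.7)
\emph{Proof proposal.}
The plan is to build the desired zig-zag through an intermediate cofibrant operad $\mathcal{R}$ that simultaneously resolves $\cP$ and $\cQ$ in a way compatible with the torsor structure on $\cM$.

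Specifically, I expect to extract from (a mild refinement of) the proof of Theorem \ref{thm:main} a cofibrant dg operad $\mathcal{R}$ together with operad quasi-isomorphisms $\alpha\colon \mathcal{R}\xrightarrow{\sim}\cP$ and $\beta\colon \mathcal{R}\xrightarrow{\sim}\cQ$ for which the two induced maps $\mathcal{R}\to\cM$ agree on the nose:
\[
\alpha(r)(\mathbf{1},\dots,\mathbf{1})\;=\;\mathbf{1}\circ\beta(r)\quad\text{in }\cM,\qquad\text{for all }r\in\mathcal{R}.
\]
The construction of such $\mathcal{R}$ is the main technical obstacle; morally it is a homotopy pullback of $\cP\to\cM\leftarrow\cQ$, and strict commutativity is achieved by rigidifying the coherent homotopy into an honest equality using the cofibrancy of $\mathcal{R}$. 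Once $\mathcal{R}$ is in hand, view $\cM$ as an $(\mathcal{R},\mathcal{R})$-bimodule via $\alpha$ and $\beta$; the two canonical maps from \eqref{equ:modmaps} then become morphisms of $(\mathcal{R},\mathcal{R})$-bimodules, each one a quasi-isomorphism on the middle by hypothesis.

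With this setup, the composite $\mathcal{R}\xrightarrow{\beta}\cQ\xrightarrow{q\mapsto \mathbf{1}\circ q}\cM$ is itself a morphism of $(\mathcal{R},\mathcal{R})$-bimodules: right $\mathcal{R}$-linearity is immediate from $\beta$ being an operad map and the right-$\cQ$-module axiom on $\cM$, while left $\mathcal{R}$-linearity reduces, after using the bimodule compatibility of the $\cP$- and $\cQ$-actions on $\cM$, precisely to the commutative square of the previous paragraph. This yields the required zig-zag
\[
(\cP\aol\cM\aor\cQ)\xleftarrow{(\alpha,\mathrm{id},\beta)}(\mathcal{R}\aol\cM\aor\mathcal{R})\xleftarrow{\sim}(\mathcal{R}\aol\mathcal{R}\aor\mathcal{R})\xrightarrow{(\beta,\beta,\beta)}(\cQ\aol\cQ\aor\cQ),
\]
in which each outer entry is a quasi-isomorphism of operads and each middle entry is a quasi-isomorphism of the underlying chain complexes, compatibly with the operad actions. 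As indicated, the crux of the argument is the strict commutativity in the first step; everything else is essentially formal bookkeeping.
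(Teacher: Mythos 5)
Your zig-zag itself is sound: granting an operad $\mathcal R$ with quasi-isomorphisms $\alpha\colon \mathcal R\to\cP$, $\beta\colon \mathcal R\to\cQ$ satisfying the \emph{strict} identity $\alpha(r)(\mathbf 1,\dots,\mathbf 1)=\mathbf 1\circ\beta(r)$, your verification that $r\mapsto \mathbf 1\circ\beta(r)$ is a map of $(\mathcal R,\mathcal R)$-bimodules is correct, and the three-step zig-zag does prove the theorem. The genuine gap is exactly the step you defer: the existence of $(\mathcal R,\alpha,\beta)$ with that strict compatibility is the whole content of the statement, and the two devices you invoke do not produce it. First, ``the homotopy pullback of $\cP\to\cM\leftarrow\cQ$'' is not a diagram in the category of operads: $\cM$ is a bimodule, not an operad, and the maps \eqref{equ:modmaps} are not operad morphisms, so there is no ambient category of operads in which this pullback (homotopy or otherwise) can be formed. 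Second, cofibrancy of $\mathcal R$ only gives lifting against (acyclic fibrations of) \emph{operad} maps; since the target $\cM$ is not an operad, there is no lifting problem whose solution forces the two composites $\mathcal R\to\cM$ to agree on the nose. What one gets by lifting $\beta$ across the zig-zag of Theorem \ref{thm:main} is at best agreement up to homotopy, and with only a homotopy the map $\varphi\colon\mathcal R\to\cM$ fails to be left $\mathcal R$-linear, so your middle comparison $(\mathcal R\aol\mathcal R\aor\mathcal R)\to(\mathcal R\aol\cM\aor\mathcal R)$ breaks; repairing this by coherent-homotopy methods is precisely the lengthy work the theorem is meant to avoid.

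For comparison, the paper resolves this strictness problem without any cofibrant operad $\mathcal R$: it first replaces $\cM$ by a quasi-free right $\cQ$-module resolution $M_\infty$ (Proposition \ref{prop:resolution}), which by the lifting lemma admits a strict section $\mu\colon M_\infty\to\cQ$ of the right-module quasi-isomorphism $q\colon\cQ\to M_\infty$; it then uses the $\cQ$-equivariant endomorphism operad $\End_{\cQ}M_\infty$ as the intermediate \emph{receiving} operad. The left $\cP$-action factors as an operad map $p'\colon\cP\to\End_{\cQ}M_\infty$, and the pair $(q,\mu)$ yields an operad quasi-isomorphism $\overline q\colon\cQ\to\End_{\cQ}M_\infty$ for which the analogue of your square commutes strictly by direct computation ($\overline q(c)(\mathbf 1,\dots,\mathbf 1)=q(c)$, using $\mu(\mathbf 1)=1_\cQ$). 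In other words, the paper turns the non-operadic target $\cM$ into an operad ($\End_{\cQ}$ of a suitable resolution) rather than trying to rigidify maps out of a cofibrant source; if you want to salvage your approach, you would essentially have to re-introduce this construction, at which point the cofibrant $\mathcal R$ becomes unnecessary.
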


The result can be used to significantly shorten the proof of the homotopy braces formality theorem \cite{brformality}, see section \ref{sec:brformality}.

\section{Notation and prerequisites}
We work over a field $\K$ of characteristic zero, i.e., all vector spaces or differential graded (or dg for short) vector spaces are $\K$-vector spaces. 
By default, all vector spaces, operads, modules or bimodules are differential graded, even when not explicitly stated.
We work in homological conventions, i.e., the differentials have degree $-1$.
For $V$ a (dg) vector space, we denote by $V^d$ the subspace of elements of degree $d$. $V[r]$ denotes the degree shifted vector space defined such that $(V[r])^d = V^{d+r}$
We denote the degree of a homogeneous element $v\in V$ by $|v|$.

A good introduction to the theory of operads can be found in the textbook \cite{lodayval}, whose conventions we will mostly follow.
In particular, an $\bbS$-module $\cP$ is a collection of (dg) vector spaces $\cP(N)$, $N=0,1,2,\dots$, with right actions of the symmetric groups $\bbS_N$.
The category of $\bbS$-modules comes with a monoidal product $\circ$, see \cite[section 5.1.6]{lodayval}, and operads are monoids with respect to this product. Concretely, an operad $\cP$ is defined by providing morphisms
\begin{align*}
 \cP\circ \cP &\to \cP & 1&\to \cP  
\end{align*}
satisfying natural compatibility relations. An operad is augmented if it is equipped with an additional morphism $\cP\to 1$, right inverse to the unit map above.
For those operads one may define the operadic bar construction, and dually for coaugmented cooperads the operadic cobar construction as in \cite[section 6.5]{lodayval}. We denote the kernel of the augmentation by $\bar\cP$.

Operadic left-, right- or bimodules over $\cP$ are left- or right- or bimodules of the monoid $\cP$. For example, a left $\cP$-module (also called $\cP$-algebra) is an $\bbS$-module $\cM$ together with a map of $\bbS$-modules
\[
 \cP \circ \cM \to \cM
\]
satisfying obvious compatibility relations with the operadic composition and unit. Let us point out that the notion of operadic bimodule has been introduced in \cite{KaMa}, while that of operadic right modules goes back to \cite{smirnov}.

If $\cM$ is a right $\cQ$-module, we denote the action of $c_1,\dots, c_n$ on $m\in \cM(n)$ by $m(c_1,\dots, c_n)$.
If $f$ is a map of complexes, the respective homology map will be denoted by $[f]$.
We will denote the unit of an operad $\cP$ by $1_\cP\in \cP(1)$.

The homotopy theory of operads and algebras over operads has been developed in \cite{hinich, GJ}.
A somewhat more general treatment including the cases of operadic right modules and bimodules is contained in \cite{Frbook}.
In the present work we will sometimes allude to the general model categorial result shown in the aforementioned references.
However, to keep the exposition elementary, and since we do not use any deep statement of loc. cit., we will make explicit all arguments of model categorial nature below.

\section*{Acknowledgements}
We thank B. Fresse for helpful discussions.

\section{The proof of Theorems \ref{thm:main} and \ref{thm:uniqueness}} \label{sec:fullproof}

As a first reduction, we will replace the torsor $M$ by a suitable resolution.

\begin{proposition}\label{prop:resolution}
Let $M$ be an operadic $\cP$-$\cQ$-bimodule. Then there is a $\cP$-$\cQ$ bimodule $M_\infty$, quasi-free as right $\cQ$-module with a quasi-isomorphism $M_\infty\to M$. If $M$ is an operadic quasi-torsor then $M_\infty$ is a quasi-torsor. Furthermore, one can find a right inverse $\mu:M_\infty \to \cQ$ of the quasi-isomorphism of right $\cQ$-modules $\cQ\to M_\infty$ as in \eqref{equ:modmaps}.
\end{proposition}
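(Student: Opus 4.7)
The plan is to build $M_\infty$ by a step-by-step cell-attachment procedure in the category of $\cP$-$\cQ$-bimodules, using free bimodule cells of the form $\cP\circ V\circ\cQ$ (which are in particular free as right $\cQ$-modules), and then to extract $\mu$ by exploiting the resulting semi-free structure of $M_\infty$ as a right $\cQ$-module.

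For the construction, I would first introduce an element $\mathbf 1_\infty\in M_\infty^0(1)$ as a right-$\cQ$-module generator lifting $\mathbf 1\in M$, and then inductively attach further free bimodule cells in the usual way to make the evaluation map $M_\infty\to M$ a bimodule quasi-isomorphism. Since every cell is free as a right $\cQ$-module, $M_\infty$ is too. The quasi-torsor claim then follows by a two-out-of-three argument: the canonical maps $\cP\to M_\infty$ and $\cQ\to M_\infty$ associated to the cycle $\mathbf 1_\infty$ compose with $M_\infty\to M$ to give the corresponding canonical maps for $M$, which are quasi-isomorphisms by hypothesis.

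For the right inverse $\mu$, I use that by construction $\mathbf 1_\infty$ is a right-$\cQ$-module generator. Writing $M_\infty=W\circ\cQ$ as a graded right $\cQ$-module with $W=\K\mathbf 1_\infty\oplus W'$, the canonical inclusion $\iota\colon\cQ\to M_\infty$ identifies $\cQ$ with a degreewise direct summand which is also a dg subcomplex of $M_\infty$ (since $\mathbf 1_\infty$ is a cycle). A right $\cQ$-linear retraction $\mu$ is equivalent to a degree-zero $\bbS$-module map $\psi\colon W'\to\cQ$ solving an equation of the form $\alpha=d_\cQ\psi-\psi\beta$, where $\alpha$ and $\beta$ are the blocks of the differential on $M_\infty$ relative to the decomposition $\cQ\oplus W'\circ\cQ$. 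This equation is solvable because $\iota$ being a quasi-isomorphism forces $W'\circ\cQ$ (with differential $\beta$) to be acyclic; being also quasi-free, it is then contractible as a right $\cQ$-module, so $\Hom_\cQ(W'\circ\cQ,\cQ)$ is acyclic, and the cycle $\alpha$ (a cycle by the relation $d_{M_\infty}^2=0$) is automatically a boundary. Alternatively, one can phrase the same argument inductively: extend $\psi$ one cell of $W'$ at a time, the obstruction at each stage being a class in $H_\ast(\cQ)$ which vanishes by the quasi-isomorphism $\iota$.

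The main technical obstacle is the very first step of the construction, where one must ensure that $\mathbf 1_\infty$ becomes an honest right-$\cQ$-module generator (so that the degreewise decomposition $M_\infty=\cQ\oplus W'\circ\cQ$ is literally available) rather than merely some lifted element of $M_\infty$; once this is arranged, both the quasi-torsor property and the existence of $\mu$ follow from essentially formal obstruction-theoretic considerations.
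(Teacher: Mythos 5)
Your overall strategy is sound but genuinely different from the paper's. For the resolution itself the paper does not attach cells abstractly: it takes the explicit bar--cobar-type resolution $M_\infty=\Free_\cQ(B(M))$, the quasi-free right $\cQ$-module generated by the bar construction of $M$ (with a separate small argument to handle non-augmented $\cQ$), and proves $M_\infty\to M$ is a quasi-isomorphism by the Loday--Vallette argument; the quasi-torsor claim is then proved exactly as you do, by lifting $\mathbf 1$ to a cycle and using two-out-of-three. For the retraction $\mu$, the paper does not split $M_\infty\cong\cQ\oplus W'\circ\cQ$ and solve $\alpha=d_\cQ\psi-\psi\beta$; it instead proves a lifting lemma (Lemma \ref{lem:lifting}): a quasi-free right $\cQ$-module whose generators carry a filtration with $d\,\mF^jV\subset\Free_\cQ(\mF^{j-1}V)$ lifts against surjective quasi-isomorphisms, and then applies the ``surjective trick'' with $F=\cQ\oplus N\oplus N[1]$ (Lemma \ref{iffree}), using that the image of $1_\cQ$ is a generator --- the same point you isolate as the ``main technical obstacle,'' which in the paper is automatic because $\mathbf 1_M\in M\subset B(M)$ is itself a generator, and in your construction is arranged by starting with a cell freely generated by $\mathbf 1_\infty$ with $d\mathbf 1_\infty=0$. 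Your global argument for $\mu$ is correct, but note that the step ``acyclic and quasi-free implies contractible, hence $\Hom_\cQ(W'\circ\cQ,\cQ)$ is acyclic'' is exactly where the cell filtration and a limit/spectral-sequence (or inductive homotopy) argument hide; it is of the same depth as the paper's lifting lemma and should be proved or cited, not just asserted (graded freeness alone is not enough).

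Your ``alternative'' inductive phrasing, however, is not correct as stated. The obstruction to extending $\psi$ over a new cell $w$ is the class of $\alpha(w)+\psi(\beta(w))$ in $H(\cQ)$, and nothing forces it to vanish ``by the quasi-isomorphism $\iota$'' at each finite stage: a legitimate choice of $\psi$ on earlier cells can produce a nonzero obstruction later, which can only be removed by going back and modifying those earlier choices. (Already for $\cQ=\K$, $W'$ spanned by $w_1$ in degree $0$ and $w_2$ in degree $1$ with $\beta w_2=w_1$ and $\alpha(w_2)=a\neq 0$, the choice $\psi(w_1)=0$ leads to the nonzero obstruction $a\in H_0(\cQ)$ at the second cell, although a retraction exists, namely $\psi(w_1)=-a$.) So either keep the global contractibility argument, which integrates these corrections, or argue as the paper does by lifting against a \emph{surjective} quasi-isomorphism, where the stage-by-stage obstructions really do vanish.
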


The proof of the proposition will be given in section \ref{sec:resolution}. We will believe it for now and use it to show Theorems \ref{thm:main} and \ref{thm:uniqueness}. Indeed, Proposition \ref{prop:resolution} clearly allows us to replace $\cP\aol M\aor \cQ$ by its resolution 
$$
 \begin{tikzcd}[column sep=0.5em]				
 \cP \arrow{d}{\id} &\aol & M_\infty \arrow{d}{\sim} & \aor & \cQ \arrow{d}{\id} \\
 \cP & \aol  & M &\aor & \cQ 
 \end{tikzcd}
$$
and hence reduces the statements of Theorems \ref{thm:main} and \ref{thm:uniqueness} to the following result.

\begin{theorem}\label{main}
Let $\mathcal P$ and $\mathcal Q$ be two dg operads and let $M$ be a $\mathcal{P}-\mathcal Q$ operadic quasi-torsor.
Suppose furthermore that there is a map $\mu \colon    M \to \cQ$ of right $\cQ$-modules such that $\mu \circ q = \id_{\cQ}$.
Then the operads $\cP$ and $\cQ$, and the bimodules $\mathcal P \aol M \aor \mathcal Q$, $\mathcal P \aol \cP \aor \mathcal P$ and $\mathcal Q \aol \cQ \aor \mathcal Q$
are quasi-isomorphic.
\end{theorem}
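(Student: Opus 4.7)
The plan is to exploit the section $\mu$ to build an operadic quasi-isomorphism from a cofibrant resolution of $\cP$ to $\cQ$, from which the claimed bimodule zig-zags will follow. The starting observation is that $\mu$ makes $\iota : \cQ \to M$, $q \mapsto \bbo \circ q$, a split monomorphism of right $\cQ$-modules, so that $M \cong \iota(\cQ) \oplus \bar M$ with $\bar M := \ker\mu$ acyclic (both $\iota$ and $\mu$ being quasi-isomorphisms). By Proposition \ref{prop:resolution} we may assume $M$ is quasi-free as a right $\cQ$-module, and consequently $\bar M$ inherits a right $\cQ$-linear contracting homotopy $h$.

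The naive ``torsor trick'' produces the map $\phi_0 : \cP \to \cQ$, $\phi_0(p) := \mu(p(\bbo, \ldots, \bbo))$, which is a quasi-isomorphism of $\bbS$-modules since $[\phi_0]$ factors as the composition of the two known isomorphisms $[\mu]$ and $[\cP \to M]$. It is not strict in general: by bimodule compatibility, the defect $\phi_0(p \circ_i p') - \phi_0(p) \circ_i \phi_0(p')$ lies in the image under $\mu$ of a left $\cP$-action applied to the $\bar M$-component of $p'(\bbo, \ldots, \bbo)$. To repair this I would replace $\cP$ by its cofibrant resolution $\cP_\infty := \Omega B \cP$ and build a strict operad morphism $\Phi : \cP_\infty \to \cQ$ inductively. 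Equivalently, one builds a twisting morphism $\tau : B\cP \to \cQ$ (a Maurer-Cartan element of the convolution dg Lie algebra) whose linear part encodes $\phi_0$ and whose higher components correct its non-strictness order-by-order, using $h$ to solve the Maurer-Cartan equation at each stage. Since $[\Phi] = [\mu]\circ[\cP \to M]\circ[\cP_\infty \to \cP]^{-1}$ is a composition of isomorphisms, $\Phi$ is a quasi-isomorphism, producing the zig-zag $\cP \xleftarrow{\sim} \cP_\infty \xrightarrow{\sim} \cQ$.

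For the bimodule statement, pulling the structures on $\cP$, $M$, and $\cQ$ back along $\cP_\infty \to \cP$ and $\Phi : \cP_\infty \to \cQ$ turns all three into $\cP_\infty$-$\cQ$-bimodules; the canonical map $\cP \to M$, the map $\iota : \cQ \to M$, and $\Phi$ itself then fit into a diagram of $\cP_\infty$-$\cQ$-bimodule quasi-isomorphisms linking $\cP \aol M \aor \cQ$, $\cP \aol \cP \aor \cP$, and $\cQ \aol \cQ \aor \cQ$.

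The hardest part will be the inductive construction of $\tau$ (equivalently of $\Phi$): one must verify that at each stage the obstruction lands in $\bar M$ and can be killed \emph{compatibly with the right $\cQ$-module structure}, using the contracting homotopy $h$. This is precisely the point where the combined assumptions --- existence of the section $\mu$ together with quasi-freeness of $M$ as a right $\cQ$-module --- enter essentially.
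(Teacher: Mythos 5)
Your route (resolve $\cP$ by $\Omega B\cP$ and transfer the structure onto $\cQ$ along a $\cQ$-linear contraction of $M$ onto $q(\cQ)$) is genuinely different from the paper's, and could in principle be pushed through, but as written the two load-bearing steps are gaps. First, a right $\cQ$-linear contracting homotopy $h$ on $\bar M=\ker\mu$ does \emph{not} follow from quasi-freeness alone: over the graded algebra $\K[\epsilon]$ with $|\epsilon|=-1$ (viewed as an operad concentrated in arity one), the rank-one free module with differential $1\mapsto\epsilon$ is quasi-free and acyclic, yet every right-linear degree $+1$ map is zero, so no linear contraction exists. What you actually need is the cofibrancy filtration of $M_\infty$ (Remark \ref{rem:Minftycofibrant}) together with an argument in the style of Lemma \ref{lem:lifting} or a Whitehead-type model-category statement; ``consequently'' hides this. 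Second, the inductive construction of $\tau$ (equivalently $\Phi$) is the actual content of the theorem and is only announced: the obstruction to extending a partial twisting morphism lives in $\cQ$ (or $H(\cQ)$), not in $\bar M$, so ``the obstruction lands in $\bar M$'' is not yet a vanishing argument. To make it work you must either write the explicit homotopy-transfer formulas (insert $h$ on internal edges of bar trees, evaluate in $M$ via the $\cP$-action, apply $\mu$, and check that $\cQ$-multilinearity is preserved so the output lies in $\End_\cQ\cQ\cong\cQ$) or give a genuine obstruction computation; neither is done.

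The bimodule half also fails as described. The operad $\cP$ carries no right $\cQ$-action, so it cannot be ``turned into a $\cP_\infty$-$\cQ$-bimodule''; and $q\colon\cQ\to M$ is \emph{not} a strict map of left modules when $\cQ$ carries the transferred ($\Phi$-pullback) structure and $M$ the pullback of its $\cP$-action: by your own defect formula the discrepancy is $(\id-q\mu)$ applied to $p(\mathbf 1,\dots,\mathbf 1)$, which is exact but not zero --- only $\mu\circ q=\id_\cQ$ holds, not $q\circ\mu=\id_M$, and in homotopy transfer the inclusion is only an $\infty$-quasi-isomorphism. The paper avoids both problems at once by introducing the operad $\End_\cQ M$ of right $\cQ$-equivariant endomorphisms: the action map $p'\colon\cP\to\End_\cQ M$ and the map $\overline q\colon\cQ\to\End_\cQ M$, $c\mapsto q\circ c\circ(\mu,\dots,\mu)$, are \emph{strict} operad quasi-isomorphisms precisely because $\mu\circ q=\id$, and the same identity makes $(\overline q,q,\id)$ a strict map of bimodules, so no resolution of $\cP$, no homotopy $h$, and no obstruction theory are needed. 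If you want to keep your approach, you must both carry out the transfer in the $\cQ$-equivariant multilinear setting and then produce the bimodule zig-zag by a further (cofibrant) resolution of the bimodule and a lifting argument, rather than by the strict maps you propose.
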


For any dg $\mathbb S$-module $N$ with differential $d_N$ one defines its endomorphism operad $\End N (n) = \{\varphi\colon    N^{\otimes n} \to N\}$ with the composition of maps as the operadic composition and differential 

\begin{align*}
\partial \varphi(x_1,\dots,x_n) &= d_N\varphi(x_1,\dots,x_n) -(-1)^{|\varphi|}\varphi(d_{N^{\otimes n}}(x_1,\dots,x_n))\\
&=d_N\varphi(x_1,\dots,x_n) \pm \varphi(d_Nx_1,\dots,x_n) \pm  \dots \pm \varphi(x_1,\dots,d_Nx_n).\footnotemark
\end{align*}\footnotetext{The signs are determined by the usual Koszul sign rules.}

Given two dg $\mathbb S$-modules $M$ and $N$ and a pair $(f,g)$, where $f\colon    N\to M$ and $g\colon    M\to N$ are maps of dg $\mathbb S$-modules, one can construct maps of $\mathbb S$-modules
\begin{equation}\label{end map}
\begin{aligned}
 \overline{f}\colon    & \End N &\to &\End M\\ 
              & \lambda\colon   N^{\otimes k}\to N &\mapsto & \left[ m_1,\dots, m_k \mapsto f\circ\lambda(g(m_1),\dots,g(m_k)) \right]
\end{aligned}
\end{equation}
and $\overline{g}\colon     \End M \to \End N$ in the same way. 
To check that $\overline{f}$ commutes with the differentials, let us consider arbitrary $m_1,\dots,m_n \in M^{\otimes n}$.

\begin{flalign*}
&\partial\overline{f}(\lambda)(m_1,\dots,m_n)= \\
&=d_M\overline{f}(\lambda)(m_1,\dots,m_n) \pm \overline{f}(\lambda)(d_Mm_1,\dots,m_n) \pm \dots \pm \overline{f}(\lambda)(m_1,\dots,d_Mm_n)\\
&=d_M f\circ \lambda (g(m_1),\dots,g(m_n)) \pm f\circ \lambda(g(d_Mm_1),\dots,g(m_n)) \pm \dots \pm f\circ\lambda(g(m_1),\dots,g(d_Mm_n))\\
&=f\left( d_M \lambda (g(m_1),\dots,g(m_n)) \pm  \lambda(d_Mg(m_1),\dots,g(m_n)) \pm \dots \pm \lambda(g(m_1),\dots,d_Mg(m_n))\right)\\
&=f\circ\partial\lambda(g(m_1),\dots,g(m_n))\\
&=\overline{f}(\partial\lambda)(m_1,\dots,m_n).
\end{flalign*}

Therefore $\overline{f}$ and $\overline{g}$ are well defined maps of dg $\mathbb S$-modules.

\begin{remark}\label{iso=>iso}
If $f$ and $g$ are isomorphisms (not necessarily inverse to each other),  the injectivity and surjectivity of $\overline{f}$ are easily checked, so $\overline{f}$ and $\overline{g}$ (by symmetry) are isomorphisms.
\end{remark}

We start the proof of Theorem \ref{main} with a natural relation between the endomorphism operads of two bimodules.

\begin{lemma}
Let $M$ and $N$ be dg $\mathbb S$-modules and $f\colon   N\to M$ and $g\colon   M\to N$ are quasi-isomorphims of dg $\mathbb S$-modules.
Then $\overline{f}\colon   \End N \to \End M$, as defined above, is also a quasi-isomorphism.
\end{lemma}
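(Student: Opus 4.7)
The plan is to upgrade $\overline{f}$ to an honest chain homotopy equivalence, which immediately gives the desired quasi-isomorphism. The key input is that $\K$ has characteristic zero, so that every quasi-isomorphism of (dg $\bbS$-)modules admits a two-sided chain homotopy inverse; the $\bbS_n$-equivariance of the inverse and of the relevant homotopies is automatic by averaging.

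First I would choose chain homotopy inverses $\tilde f\colon M\to N$ with $\tilde f f\simeq \id_N$ and $f\tilde f\simeq \id_M$, and $\tilde g\colon N\to M$ with $g\tilde g\simeq \id_N$ and $\tilde g g\simeq \id_M$, where $\simeq$ denotes chain homotopy. Next I would define a candidate inverse $T\colon \End M\to \End N$ by exactly the formula in \eqref{end map} but with the pair $(\tilde f,\tilde g)$ in place of $(f,g)$, namely
$$T(\mu)(n_1,\dots,n_k)=\tilde f\circ \mu(\tilde g(n_1),\dots,\tilde g(n_k)).$$
The verification that $T$ is a chain map is identical to the computation already carried out above for $\overline f$.

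The heart of the argument is then a direct expansion showing
$$T\circ \overline f(\lambda)=(\tilde f f)\circ \lambda\circ (g\tilde g)^{\otimes k},\qquad \overline f\circ T(\mu)=(f\tilde f)\circ \mu\circ (\tilde g g)^{\otimes k}.$$
Each factor in these expressions is chain homotopic to an identity map. Combining two standard facts --- (i) pre- and post-composition by chain-homotopic maps induce chain-homotopic operators on the $\Hom$-complex, and (ii) $u\simeq \id_V$ implies $u^{\otimes k}\simeq \id_{V^{\otimes k}}$ via the telescoping homotopy $\sum_{i=1}^k u^{\otimes (i-1)}\otimes H\otimes \id^{\otimes (k-i)}$ associated to any $H$ with $u-\id=[d,H]$ --- one concludes that $T\circ \overline f\simeq \id_{\End N}$ and $\overline f\circ T\simeq \id_{\End M}$. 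Hence $\overline f$ is a chain homotopy equivalence, and in particular a quasi-isomorphism.

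I expect the only step requiring real care to be (ii), where the Koszul signs in the telescoping homotopy must be tracked; the bookkeeping is essentially the same as that already displayed to check $\overline f$ commutes with the differential. Beyond this there is no serious obstacle: the $\bbS$-module structure causes no trouble because all constructions are functorial, and characteristic zero lets us replace $\tilde f$, $\tilde g$, and the chain homotopies by their $\bbS_n$-equivariant averages without loss.
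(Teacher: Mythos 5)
Your argument is correct, but it takes a genuinely different route from the paper. You upgrade $f$ and $g$ to chain homotopy equivalences (possible over a field, with $\bbS_n$-equivariance of the inverses and homotopies obtained by averaging in characteristic zero), build the candidate inverse $T$ from the pair $(\tilde f,\tilde g)$, and verify $T\circ\overline f\simeq\id$ and $\overline f\circ T\simeq\id$ via the two standard homotopy facts you cite; this exhibits $\overline f$ as a chain homotopy equivalence arity-wise, which is even slightly stronger than the stated conclusion, and it correctly handles the fact that $f$ and $g$ are not assumed quasi-inverse to each other, since $\tilde f$ and $\tilde g$ are chosen independently. The paper instead argues purely homologically: exactness of $W\mapsto W^{\otimes n}$, $\Hom(W,V)$ and $\Hom(V,W)$ over the ground field gives a canonical identification $H(\End N)\cong\End H(N)$, under which $[\overline f]$ corresponds to $\overline{[f]}$, and the latter is an isomorphism by the elementary Remark \ref{iso=>iso} applied to the isomorphisms $[f]$ and $[g]$. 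The paper's route avoids choosing homotopy inverses and tracking Koszul signs in telescoping homotopies, at the cost of the Künneth-type identification of the homology of the Hom complex; your route avoids that identification but requires the (routine, characteristic-zero) equivariant strictification of quasi-isomorphisms to homotopy equivalences and some sign bookkeeping, which you correctly flag. One small point to make explicit if you write this up: the homotopies you use (the averaged $H$, $K$ and the telescoping homotopy on $N^{\otimes k}$) are morphisms of $\bbS$-modules in each tensor factor, so the composition operators they induce do preserve the subcomplex of equivariant maps that constitutes $\End N(k)$; with that noted, your proof goes through.
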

\begin{proof}

Fixed $n\in \mathbb N_0$ and a vector space $V$, the three functors from $Vect$ to $Vect$ given by 
$$W\mapsto W^{\otimes n}, W\mapsto \Hom(W,V) \text{ and } W\mapsto \Hom(V,W)$$
are exact. Therefore there are canonical isomorphisms 
\begin{multline*}
 H(\End N)(n) = H(\Hom(N^{\otimes n}, N)) = \Hom(H(N^{\otimes n}), H(N)) \\=H(\Hom(H(N)^{\otimes n}, H(N)) = \End H(N) (n).
\end{multline*}

This identification is given by the map
\begin{align*}
I\colon   H(\End N) &\to \End H(N) \\
\lambda \colon    N^{\otimes k}\to N &\mapsto 
\left(
\begin{aligned}
I(\lambda)\colon    H(N)^{\otimes k} &\to H(N)\\
     ([n_1],\dots,[n_k])&\mapsto [\lambda(n_1,\dots,n_k)]
\end{aligned}
\right).
\end{align*}

Given $(f,g)$ satisfying the hypothesis of the lemma, we get a second pair $([f],[g])$ given by the induced maps on homology, thus defining $\overline{\left[f\right]}\colon   \End H(N)\to\End H(M)$.

It is easy to check that the following diagram is commutative:

$$\begin{tikzcd}
\End H(N) \arrow{r}{\overline{\left[f\right]}}  & \End H(M)  \\
H(\End N) \arrow{u}{I}\arrow{r}{\left[\overline{f}\right]} & H(\End M) \arrow[swap]{u}{I}
\end{tikzcd}
$$

Since by hypothesis the maps $\left[f\right]$ and $\left[g\right]$ are isomorphisms, $\overline{\left[f\right]}$ is an isomorphism by \ref{iso=>iso}. We conclude that $\left[\overline{f}\right]$ must be an isomorphism as well.
\end{proof}

If $N$ is a right $\cQ$-module one defines its operad of $\cQ$-invariant endomorphisms, $\End_{\cQ}N$ (cf. \cite[section 9.4]{Frbook}), as the subset of its endomorphism operad formed by the morphisms $\lambda \colon    N^{\otimes n}(k)\to N(k)$ of $\mathbb S$-modules such that $\forall x_1\in N(i_1),\dots, x_n \in N(i_n) \text{ and } c_1,\dots,c_k\in \cQ$ $$\lambda(x_1(c_1,\dots,c_{i_1}),\dots,x_n(c_{i_1+...+i_{n-1}+1},...,c_{k})) = \lambda(x_1,\dots, x_n) (c_1,\dots c_k).$$

It is clear that $\End_{\cQ} N$ is closed under operadic composition.  Using the fact that the $N$ is a dg right module over $\cQ$ it is a straightforward calculation to check that the differential restricts to $\End_{\cQ} N$ making $\End_{\cQ} N$ a dg operad.  \\

Given two dg right $\cQ$-modules $M$ and $N$, $f\colon   N\to M$ and $g\colon   M\to N$, maps of $\cQ$-modules, we consider the morphism $\overline{f}\colon   \End N \to \End N$ as defined in \eqref{end map}. If $\lambda\in \End_{\cQ} N$, it is clear that since $f$, $\lambda$ and $g$ commute with the right $\cQ$ action, then $\overline f (\lambda)$ will also commute with the $\cQ$ action, so $\overline{f}$ restricts to a map $\End_{\cQ} N \to \End_{\cQ} M$ that we still denote by $\overline{f}$.

\begin{lemma}
Let $M$ and $N$ be two dg right $\cQ$-modules and let $f\colon   N\to M$ and $g\colon   M\to N$ be two quasi-isomorphisms of right $\cQ$-modules, quasi-inverse to each other. Then the map of $\mathbb S$-modules
$\overline{f}\colon    \End_{\cQ} N \to \End_{\cQ} M$ is also a quasi-isomorphism.
\end{lemma}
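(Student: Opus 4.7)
The plan is to lift the argument of the previous lemma to the $\cQ$-equivariant setting by constructing explicit $\cQ$-linear chain homotopies, rather than chasing invariants through the unconstrained endomorphism operads. I interpret ``quasi-inverse'' in the statement as the existence of degree-$1$ maps of right $\cQ$-modules $h_N\colon N\to N$ and $h_M\colon M\to M$ with $gf - \id_N = d h_N + h_N d$ and $fg - \id_M = d h_M + h_M d$; this is harmless in the applications envisaged by the paper, where one of the modules is quasi-free as a right $\cQ$-module (cf.\ Proposition \ref{prop:resolution}) and thus admits such homotopies. Setting $\phi_N := gf$ and $\phi_M := fg$, a direct computation gives
\[
(\overline{g}\circ\overline{f})(\lambda) = \phi_N\circ\lambda\circ\phi_N^{\otimes k}
\]
for $\lambda\in\End_\cQ N(k)$, and analogously for $\overline{f}\circ\overline{g}$ on $\End_\cQ M$.

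Next I would show that $\phi_N^{\otimes k} - \id_{N^{\otimes k}}$ is null-homotopic by a $\cQ$-equivariant homotopy $H_k$, constructed inductively from $h_N$ via the telescoping decomposition
\[
\phi_N^{\otimes k} - \id = (\phi_N - \id)\otimes \phi_N^{\otimes(k-1)} + \id\otimes (\phi_N^{\otimes(k-1)} - \id),
\]
using that tensoring a null-homotopic map with any chain map preserves null-homotopy. Equivariance of $H_k$ is immediate from that of $h_N$ and $\phi_N$, since $\cQ$ acts diagonally on $N^{\otimes k}$. Using $h_N$ and $H_k$ I would then write down a chain homotopy $\mathcal H\colon \End_\cQ N \to \End_\cQ N$ of the form $\mathcal H(\lambda) = h_N\circ\lambda \;\pm\; \phi_N\circ\lambda\circ H_k$ and check that $d\mathcal H + \mathcal H d = \overline{g}\circ\overline{f} - \id_{\End_\cQ N}$. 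Because $h_N$, $\phi_N$, and $H_k$ are $\cQ$-equivariant, $\mathcal H$ takes values in $\End_\cQ N$ as required. A symmetric construction using $h_M$ yields a null-homotopy of $\overline{f}\circ\overline{g} - \id$ on $\End_\cQ M$. Together the two null-homotopies show that $\overline{f}$ and $\overline{g}$ induce mutually inverse isomorphisms on homology, so $\overline{f}$ is in particular a quasi-isomorphism.

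The conceptual content reduces to the elementary fact that tensoring with a chain map preserves null-homotopies, now applied within the $\cQ$-equivariant category. The main obstacle I anticipate is purely bookkeeping: tracking the Koszul signs in the inductive definition of $H_k$ and in the verification of the homotopy identity for $\mathcal H$. Justifying the stronger reading of ``quasi-inverse'' is the one conceptual point to pin down, but it costs nothing in the applications, and bypassing it would force a detour through cofibrant replacements that would conflict with the paper's aim of an elementary exposition.
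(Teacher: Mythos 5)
There is a genuine gap, and it lies exactly where you flag it: your argument proves a statement with strictly stronger hypotheses than the lemma. The lemma only assumes that $f$ and $g$ are quasi-isomorphisms of right $\cQ$-modules that are quasi-inverse \emph{on homology}, i.e.\ $[g]=[f]^{-1}$. You instead assume the existence of degree-one homotopies $h_N,h_M$ which are themselves maps of right $\cQ$-modules. For arbitrary dg right $\cQ$-modules this is genuinely stronger: over a field one can always find \emph{some} chain homotopy between $gf$ and $\id_N$, but not in general a $\cQ$-equivariant one (already for $\cQ$ concentrated in arity one, i.e.\ dg modules over a dg algebra, quasi-isomorphisms need not be equivariant homotopy equivalences unless the modules are cofibrant/homotopically projective). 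Since your homotopy $\mathcal H$ must land in $\End_{\cQ}N$, the equivariance of $h_N$, $h_M$ is not a technicality you can drop, so your proof does not establish the lemma as stated.

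Your fallback --- that the strengthening is ``harmless in the applications'' because one of the modules is quasi-free --- is also not free of charge. In the application inside Theorem \ref{main} one has $\mu\circ q=\id_{\cQ}$ on the nose, so $h_N=0$ works, but you would still need a $\cQ$-equivariant homotopy between $q\circ\mu$ and $\id_{M}$ on the quasi-free side; producing it requires a Whitehead-type lifting argument through the filtration (in the spirit of Lemma \ref{lem:lifting}, but for homotopies), which you neither state nor prove, and which is comparable in length to the proof you are trying to replace. By contrast, the paper's proof needs no homotopies at all: it passes to homology, embeds $H(\End_{\cQ}N)$ via the map $I'$ into $\End_{H(\cQ)}H(N)$, and observes that $\overline{[f]}$ and $\overline{[g]}$ restrict to mutually inverse maps between the images $X(N)$ and $X(M)$, whence $[\overline f]$ is an isomorphism. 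If you want to salvage your route, either add the equivariant-homotopy hypothesis to the statement and carry the burden of verifying it in the application, or switch to an argument (like the paper's) that uses only the homology-level hypothesis; the homotopy calculus you sketch (telescoping $H_k$, the homotopy $\mathcal H$) is fine in itself, but it answers a different question.
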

\begin{proof}

Let us define 
\begin{align*}
I'\colon   H(\End_{\cQ} N) &\to \End_{H(\cQ)} H(N)\\
\lambda \colon    N^{\otimes k}\to N &\mapsto 
\left(
\begin{aligned}
I'(\lambda)\colon    H(N)^{\otimes k} &\to H(N)\\
  ([n_1],\dots,[n_k])&\mapsto [\lambda(n_1,\dots,n_k)] 
\end{aligned}
\right).
\end{align*} 


Let $i\colon    \End_{\cQ} N \to \End N$ and $j\colon    \End_{H(\cQ)} H(N) \to \End H(N)$ be the inclusion maps.
We have the following commutative diagram that essentially just expresses that $I'$ can be seen as a restriction of $I$:  

$$\begin{tikzcd}
H(\End N) \arrow{r}{I} & \End H(N)\\
H(\End_{\cQ} N) \arrow[hookrightarrow]{u}{[i]} \arrow{r}{I'} & \End_{H(\cQ)} H(N) \arrow[hookrightarrow]{u}[swap]{j}
\end{tikzcd}$$
and therefore $I'$ is injective. Let us define $X(N)$ to be the image of $I'$ inside $\End_{H(\cQ)} H(N)$.

Consider $I'(\lambda)\in X(N)$, where $\lambda \in \End_{\cQ} N$, such that $\partial \lambda=0$.
Then $\overline{f}(\lambda) \in \End_{\cQ} M$ and  $\partial(\overline{f}(\lambda))=0$, so the easy to check equality $\overline{[f]}(I'(\lambda)) = I'(\overline{f}(\lambda))$ tells us that $\overline{[f]} (X(N)) \subset X(M)$.
We obtain then the following diagram
$$
\begin{tikzcd}
H(\End_{\cQ} N) \arrow{d}{\left[\overline{f}\right]} \arrow{r}{I'} &X(N) \arrow{d}{\overline{[f]}} \arrow[hookrightarrow]{r} &\End_{H(\cQ)} H(N) \arrow{d}{\overline{[f]}}\\
H(\End_{\cQ} M) \arrow{r}{I'} &X(M)\arrow[hookrightarrow]{r} &\End_{H(\cQ)} H(M)
\end{tikzcd}
$$

By the symmetry of the problem, the map $\overline{[g]}\colon \End_{H(\cQ)} H(M)\to \End_{H(\cQ)} H(N)$ also restricts to a map $X(M)\to X(N)$ that is clearly an inverse of $\overline{[f]}\colon X(N)\to X(M)$, hence they are both isomorphisms, therefore $\left[\overline{f}\right]$ is an isomorphism as wanted.
\end{proof}

\begin{corollary}\label{qi of End operads}
If $f\colon   N\to M$ is a quasi-isomorphism of right $\cQ$-modules and $g\colon   M\to M$ is a dg map of right $\cQ$-modules such that $g\circ f = \id_N$, then $\overline f \colon    \End_{\cQ} N \to \End_{\cQ}M$ is a quasi-isomorphism of operads.
\end{corollary}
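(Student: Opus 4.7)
The plan is to reduce the corollary directly to the preceding lemma. That lemma produces a quasi-isomorphism $\overline{f}\colon \End_{\cQ} N\to \End_{\cQ} M$ of dg $\mathbb{S}$-modules whenever $f$ and $g$ are quasi-inverse quasi-isomorphisms of right $\cQ$-modules. Under the present weaker hypothesis (only $f$ is assumed a quasi-isomorphism, and $g$ is only a strict one-sided inverse, $g\circ f=\id_N$), two things must be checked: that the hypotheses of the preceding lemma are actually met, and that the resulting $\mathbb{S}$-module map $\overline{f}$ is in fact a morphism of operads.

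First I would promote $(f,g)$ to a quasi-inverse pair. Passing to homology, the identity $g\circ f=\id_N$ gives $[g]\circ[f]=\id_{H(N)}$; since $[f]$ is already invertible by hypothesis, $[g]=[f]^{-1}$ is forced, so $[f]\circ[g]=\id_{H(M)}$ as well, and in particular $g$ is a quasi-isomorphism. The preceding lemma then applies and yields that $\overline{f}$ is a quasi-isomorphism at the $\mathbb{S}$-module level. Next I would verify operadic compatibility. Expanding $\overline{f}(\lambda\circ_i \mu)$ and $\overline{f}(\lambda)\circ_i \overline{f}(\mu)$ on arguments $m_1,\dots,m_{n+k-1}\in M$ shows that the two expressions differ only by an insertion of $g\circ f$ between the output of $\mu$ and the $i$-th input of $\lambda$; the hypothesis $g\circ f=\id_N$ is exactly what collapses this insertion and establishes strict compatibility with $\circ_i$. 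The unit $\id_N\in \End_{\cQ} N(1)$ is sent to $f\circ g\in \End_{\cQ} M(1)$, which is merely homotopic to $\id_M$ (since $[f\circ g]=\id_{H(M)}$), but this is enough for a quasi-isomorphism of operads in the sense used in the paper.

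The main step is really just the invocation of the preceding lemma, so there is no serious obstacle. The only point worth flagging is that the strict equality $g\circ f=\id_N$, rather than a mere homotopy, is precisely what makes $\overline{f}$ strictly compatible with the partial compositions $\circ_i$; this is why it was important to arrange, as part of Proposition \ref{prop:resolution}, that the right-inverse $\mu\colon M_\infty\to\cQ$ be a strict (not merely homotopical) section of the canonical map $\cQ\to M_\infty$.
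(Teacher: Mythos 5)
Your proposal is correct and follows essentially the same route as the paper: observe that $g\circ f=\id_N$ together with $f$ being a quasi-isomorphism makes $g$ a quasi-inverse of $f$, invoke the preceding lemma for the $\mathbb S$-module level statement, and then verify compatibility with the operadic composition by a direct computation in which the inserted $g\circ f$ collapses to $\id_N$. Your extra remark about the unit (that $\overline f(\id_N)=f\circ g$ is only homotopic to $\id_M$) is a point the paper's proof does not even address, so it does not constitute a divergence from, or a gap relative to, the paper's argument.
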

\begin{proof}
It is clear that $g$ is a quasi-inverse of $f$, therefore it is enough to check that $\overline{f}$ commutes with the operadic composition.

Let $c\in \End_{\cQ} N (k)$ and $c_1,\dots,c_k\in \End_{\cQ} N$. For all $m_1,\dots,m_n$ we have

\begin{align*}
\overline{f}(c)(\overline{f}(c_1),\dots, \overline{f}(c_k))(m_1,\dots,m_n) &= \overline{f}(c)(\overline{f}(c_1)(m_1,\dots),\dots, \overline{f}(c_k)(\dots,m_n))\\
&= \overline{f}(c)(f\circ c_1 (g(m_1),\dots),\dots, f\circ c_k(\dots,g(m_n)))\\
&=f\circ c(g\circ f \circ c_1(g(m_1),\dots),\dots,g\circ f\circ c_k(\dots,g(m_n)))\\
&=f\circ c(c_1,\dots,c_k)(g(m_1)\dots,g(m_n))\\
&=\overline{f}(c(c_1,\dots,c_k))(m_1,\dots,m_n).
\end{align*}
\end{proof}

Using the above results we can now show Theorem \ref{main}.

\begin{proof}[Proof of Theorem \ref{main}]

Notice that we can identify the dg operad $\End_{\cQ} \cQ$ with $\cQ$ via $\lambda\in\End_{\cQ} \cQ (k)\mapsto \lambda(1_\cQ,\dots,1_\cQ)$.
Using this identification, under the conditions of Theorem \ref{main} we have the following diagram:

\begin{eqnarray}\label{diagram}
\begin{tikzcd}
 \    &\End_{\cQ} M \arrow{d}{\iota_{\mathbf{1}}} & \ \\
\cP \arrow[swap]{r}{p}\arrow{ur}{p'}  & M  &\cQ \arrow{l}{q}\arrow[swap]{ul}{\overline{q}}
\end{tikzcd}
\end{eqnarray}

Let us explain the undefined maps:

$\iota_{\mathbf{1}}\colon   \End_{\cQ} M \to M$ is the map that evaluates a $\cQ$-equivariant map $\lambda\colon   M^{\otimes k} \to M$ on the element $\mathbf{1}$, i.e., $\iota_{\mathbf{1}}(\lambda)= \lambda(\mathbf{1},\dots,\mathbf{1}) \in M(k)$.

It is clear that $q$ is a map of right $\cQ$-modules. $\overline{q}\colon   \cQ \to \End_{\cQ} M$ is the map defined in \eqref{end map} taking $N=\cQ$, a right $\cQ$-module over itself and considering the pair $(q,\mu)$.

A left $\cP$-module structure on $M$ is equivalent to a morphism of operads $\cP\to \End M$. The fact that $M$ is a $\cP-\cQ$-bimodule implies that this morphism factors through $\End_{\cQ} M$. This factorizing morphism is what we call $p'$.

It is clear that the left triangle of diagram \eqref{diagram} is commutative. The right triangle is also commutative, since for all $c\in \cQ$ one has 
$$\iota_{\mathbf{1}} ( \overline{q}(c)) = \overline{q}(c)(\mathbf{1},\dots,\mathbf{1}) = q\circ c (\mu(\mathbf{1}),\dots,\mu(\mathbf{1})) = q(c(1_\cQ,\dots,1_\cQ)) = q(c).$$

By Corollary \ref{qi of End operads} $\overline{q}$ is a quasi-isomorphism of operads and by hypothesis $q$ is a quasi-isomorphism, therefore $\iota_{\mathbf{1}}$ is also a quasi isomorphism. Since $p$ is also a quasi-isomorphism we obtain that $p'$ is a quasi-isomorphism of operads.

$M$ is a $\End_{\cQ}M-\cQ$-bimodule and $\cQ$ is naturally a $\cQ-\cQ$-bimodule. The quasi-isomorphism $q\colon   \cQ \to M$  and the quasi-isomorphisms of operads $\overline{q}\colon   \cQ \to \End_{\cQ} M$ and $\id_{\cQ}$ provide us with a quasi-isomorphism between the bimodules $\cQ$ and $M$. 

The only thing that needs to be checked is that the $q$ commutes with the left action, since we already know that it commutes with the right action. 

Let $c\in \cQ(k)$ and $c_1,\dots,c_k\in \cQ$.
$$ \overline{q}(c)(q(c_1),\dots,q(c_k))=q\circ c (\mu \circ q(c_1)\dots, \mu \circ q(c_k))=q(c(c_1,\dots,c_k)).$$

Therefore we have the following zig-zag of quasi-isomorphisms:
$$
\begin{tikzcd}[column sep=0.5em]				
\cP \arrow{d}{p'} &\aol & M \arrow{d}{\id} & \aor & \cQ \arrow{d}{\id} \\
\End_{\cQ} M &\aol & M &\aor & \cQ \\
\cQ \arrow{u}{\overline{q}}& \aol  & \cQ \arrow{u}{q} &\aor & \cQ \arrow{u}{\id}
\end{tikzcd}
$$



We find a second zig-zag connecting the $\cQ-\cQ$-bimodule $\cQ$ to the $\cP-\cP$-bimodule $\cP$ by 

$$
\begin{tikzcd}[column sep=0.5em]				
\cP \arrow{d}{p'} &\aol & \cP \arrow{d}{p'} & \aor & \cP \arrow{d}{p'} \\
\End_{\cQ} M &\aol & \End_{\cQ} M &\aor & \End_{\cQ} M \\
\cQ \arrow{u}{\overline{q}}& \aol  & \cQ \arrow{u}{\overline{q}} &\aor & \cQ \arrow{u}{\overline{q}}
\end{tikzcd}
$$

thus finishing the proof of Theorem \ref{main}, and hence also that of Theorems \ref{thm:main} and \ref{thm:uniqueness}.

\end{proof}

\section{The resolution} \label{sec:resolution}

In this section we construct the resolution $M_\infty\to M$ and show Proposition \ref{prop:resolution}. 
We remark that the existence of a cofibrant resolution is guaranteed by the model structure on the category of $\cP$-$\cQ$ bimodules \cite{Frbook}. However, we will not use the result of op. cit. directly, but give a direct construction in order to keep the exposition elementary and self-contained, and to be able to verify the additional assertions of Proposition \ref{prop:resolution}.
In fact, the bar and cobar constructions for operadic right modules we used have been studied in detail in \cite[section 4]{Frpartition}, and in less generality in \cite{GJ}.

\subsection{Cobar-Bar resolution of operadic modules}\label{subsec:resolution}

Given operads $\cP$, $\cQ$ and a $\cP$-$\cQ$ bimodule $M$ we construct a canonical quasi-free (as right $\cQ$-module) resolution of $M$.

Let us temporarily assume that $\cQ$ is augmented, and denote the kernel of the augmentation by $\bar{\cQ}$.
Let $B(M)$ be the quasi-free right $B(\cQ)$-bicomodule generated by $M$, where $B(\cQ)$ are the bar construction of the operad $\cQ$.

We define $M_\infty$ to be the quasi-free right $\cQ$-module generated by $B(M)$. The elements of $M_\infty$ can be depicted as linear combinations of trees with the top node labeled by an element of $M$, the inner nodes labeled by elements of $\bar{\cQ}[-1]$ and the bottom nodes labeled by elements of $\cQ$. 
Here is an indication of the decoration of an example tree:
\[
\tikzset{
  treenode/.style = {align=center, text centered,
    font=\sffamily},
  arn_n/.style = {treenode},
}
\begin{tikzpicture}[-,level/.style={sibling distance = 4cm/#1,
  level distance = 1.5cm}] 
\node [arn_n] (f00){$M$}
    child{ node [arn_n] (f20){$\bar\cQ[-1]$} 
            child{ node [arn_n](f21) {$\bar\cQ[-1]$} 
            	child{ node [arn_n] {$\cQ$} edge from parent node[above left]
                         {}} 
							child{ node [arn_n] {$\cQ$}}
            }
            child{ node [arn_n](f10) {$\bar\cQ[-1]$}
							child{ node [arn_n] (f11) {$\cQ$}}
							child{ node [arn_n] (f12){$\cQ$}}
            }                            
    }
    child{ node [arn_n] (f01){$\bar\cQ[-1]$}
            child{ node [arn_n] at ($(f01) +(-1.3,-1.5)$) {$\cQ$} 
            }
           child{ node [arn_n] at ($(f01) +(0,-1.5)$) {$\cQ$}
            }
            child{ node [arn_n] at ($(f01) +(1.3,-1.5)$) {$\cQ$}
            }
		}
; 
\end{tikzpicture}
\]

We define a differential on $M_\infty$ by: 
\begin{itemize}
\item taking the original differential on elements of $M$, $\bar\cQ$ or $\cQ$;
\item contracting an edge connecting the element of $M$ with one element of $\bar{\cQ}$ using the right-$\cQ$ module structure on $M$;
\item contracting an edge connecting two elements of $\bar{\cQ}$ using the operadic composition;
\item contracting the edges connecting one of the lowest nodes labeled by $\bar{\cQ}$ and the respective incoming nodes labeled by elements of $\cQ$ using the operadic composition resulting in a new node labeled by $\cQ$.
\end{itemize}

Here is an indication of how the differential acts by contracting nodes using the right action or the operadic composition.
\[
\tikzset{
  treenode/.style = {align=center, text centered,
    font=\sffamily},
  arn_n/.style = {treenode},
}
\begin{tikzpicture}[-,level/.style={sibling distance = 4cm/#1,
  level distance = 1.5cm}] 
\node [arn_n] (f00){$M$}
    child{ node [arn_n] (f20){$\bar\cQ[-1]$} 
            child{ node [arn_n](f21) {$\bar\cQ[-1]$} 
            	child{ node [arn_n] {$\cQ$} edge from parent node[above left]
                         {}} 
							child{ node [arn_n] {$\cQ$}}
            }
            child{ node [arn_n](f10) {$\bar\cQ[-1]$}
							child{ node [arn_n] (f11) {$\cQ$}}
							child{ node [arn_n] (f12){$\cQ$}}
            }                            
    }
    child{ node [arn_n] (f01){$\bar\cQ[-1]$}
            child{ node [arn_n] at ($(f01) +(-1.3,-1.5)$){$\cQ$} 
            }
           child{ node [arn_n] at ($(f01) +(0,-1.5)$) {$\cQ$}
            }
            child{ node [arn_n] at ($(f01) +(1.3,-1.5)$) {$\cQ$}
            }
		}
; 
\node[ellipse, dashed, draw, inner sep=0pt, fit=(f10) (f11) (f12)](FIt1) {};
\node[ellipse,dashed, draw,inner sep=0pt,rotate=-40, fit=(f00) (f01),](FIt2) {};
\draw[dashed,rotate=50] (f20) +(180:.8) ellipse (2 and 1);
\end{tikzpicture}
\]

$M_\infty$ has a natural left $\cP$ action induced from the left $\cP$-module structure one $M$ that is compatible with the right $\cQ$-module structure, making $M_\infty$ a $\cP-\cQ$-module. 

There is a natural projection of $\cP-\cQ$-bimodules $\pi \colon M_\infty \to M$, sending $M\subset B(M) \stackrel{\id}{\to} M$ and all trees with a $\bar \cQ$-labeled node to zero. 

\begin{lemma}\label{lem:resolutionqiso}
The natural projection $M_\infty \to M$ is a quasi-isomorphism.
\end{lemma}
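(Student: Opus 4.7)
The projection $\pi$ admits the obvious $\mathbb S$-module section $s\colon M\to M_\infty$ sending $m\in M(n)$ to the tree consisting of a single top $M$-node labelled $m$ and $n$ leaves labelled $1_\cQ$; then $\pi\circ s = \id_M$, so it suffices to prove that $\ker\pi$ is acyclic. My plan is a spectral-sequence reduction to the case of vanishing internal differentials, followed by an explicit contracting homotopy in that case; this is the standard strategy for two-sided bar--cobar resolutions.

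Filter $M_\infty$ by setting $F_p M_\infty$ to be the subspace spanned by trees having at most $p$ nodes labelled by $\bar\cQ[-1]$. The internal differentials on $M$, $\bar\cQ$ and $\cQ$ preserve this count, while each of the three contraction pieces of the differential (the top $M$-node absorbing a neighbouring $\bar\cQ$; two adjacent $\bar\cQ$-nodes composing; a bottom $\bar\cQ$-node merging with its $\cQ$-children into one new $\cQ$-node) strictly decreases the count by $1$. Hence $F_\bullet$ is an exhaustive filtration by subcomplexes, bounded below by $F_{-1}=0$. On the $E^0$-page only the internal differentials act, so a K\"unneth argument identifies $E^1$ with the analogous tree complex built from the homology bimodule $H(M)$ over the homology operad $H(\cQ)$, equipped with the differential $d^1$ assembled from the three contractions.

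This reduces the lemma to the ``formal'' situation where $M$ and $\cQ$ carry zero internal differential. In that setting an explicit chain contraction $h$ of degree $+1$ on $\ker\pi$ can be produced by the extra-degeneracy trick familiar from two-sided bar--cobar constructions: decompose each leaf $\cQ$-element as $\lambda\, 1_\cQ + \bar c$ with $\bar c \in \bar\cQ$, and let $h$ promote one such $\bar c$ at a time to a fresh arity-one $\bar\cQ[-1]$-node sitting above a new $1_\cQ$-leaf, summed with appropriate signs over which leaf is being promoted. A routine, if somewhat lengthy, sign check then gives $dh + hd = \id - s\pi$, yielding acyclicity of $\ker\pi$.

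The main obstacle I anticipate is the convergence of the spectral sequence: when $\cQ(1)\neq\K$ the filtration is not bounded above, so completeness has to be verified by hand. This is done in the standard way, by fixing an arity $n$ and a homological degree $d$ and observing that only finitely many filtration strata can contribute in that window (each additional $\bar\cQ[-1]$-node shifts the homological degree by one), so the filtration is complete and the spectral sequence converges strongly.
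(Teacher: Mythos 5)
The decisive step of your plan --- the explicit contraction in the ``formal'' case --- does not work as stated. Take $\cQ$ and $M$ with zero differential, $m\in M(2)$, $c_1,c_2\in\cQ(1)$, and let $x$ be the tree with top node $m$, no inner nodes, and bottom nodes $c_1,c_2$; then $dx=0$, so the claimed identity $dh+hd=\id-s\pi$ reduces to $dh(x)=x-s\pi(x)$. With $h=\sum_i h_i$ promoting one leaf at a time, $dh(x)$ consists of terms in which at most one barred element has been absorbed into $m$ or dropped back to a leaf; writing $c_i=\lambda_i 1_\cQ+\bar c_i$ one finds uncancelled cross terms such as $m\circ_1\bar c_1$ over leaves $(1_\cQ,\bar c_2)$, the term with both leaves barred occurs with the wrong multiplicity, and the term $m(\bar c_1,\bar c_2)$ over leaves $(1_\cQ,1_\cQ)$ required by $s\pi(x)$ cannot occur at all, since $h$ and $d$ each perform a single local move while $s\pi$ absorbs \emph{all} leaves into $M$ simultaneously. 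No choice of signs repairs this. The extra-degeneracy trick is valid for the \emph{levelized} (simplicial) two-sided bar construction, where the degeneracy inserts or removes a whole level of units at once; transporting it to the tree-wise complex used here requires either a levelization comparison or a further filtration (e.g.\ by the number of bottom nodes carrying non-unit labels, so that on the associated graded the leaves decouple and the one-variable contraction applies slot by slot). So an additional idea is genuinely missing at this point; your promised ``routine sign check'' would in fact fail. For reference, the paper itself disposes of the lemma by citing the acyclicity argument of Loday--Vallette, Lemma 6.5.14, ``up to minor modifications''.

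Two smaller remarks. First, your outer reduction is sound: the section $s$ is a chain map, and the filtration by the number of $\bar\cQ[-1]$-nodes is exhaustive, bounded below, and has the stated behaviour under the differential (note that $\bar\cQ$ is an operadic ideal, so $\bar\cQ$--$\bar\cQ$ contractions stay in $\bar\cQ$), so the reduction to the formal case is legitimate. Second, your convergence worry is misplaced, and your proposed fix is wrong in general: since the operads and bimodules here are arbitrary dg objects, a $\bar\cQ[-1]$-node contributes (degree of its label)$+1$, which need not be positive, and arity-one nodes can proliferate at fixed arity, so fixing arity and homological degree does \emph{not} bound the number of inner nodes. Fortunately no such finiteness is needed: for an increasing filtration that is exhaustive and bounded below, the classical convergence theorem already gives convergence of the spectral sequence (apply it to the cone of $\pi$), so this part of the argument survives once the formal case is proved correctly.
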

\begin{proof}
The proof is the same as that of \cite[Lemma 6.5.14]{lodayval}, up to minor modifications. 
%
%
%
%
\end{proof}

Next consider the case of a non-augmented operad $\cQ$. We may forget the unit $1_{\cQ}$ and adjoin a new unit $\bbo$ to obtain the augmented operad $\cQ^\bbo$. Concretely, $\cQ^\bbo(1) = \cQ(1) \oplus \K \bbo$, and $\cQ^\bbo(n) = \cQ(n)$ for all $n\neq 1$.
We have  map of unital operads $\cQ^\bbo\to \cQ$ and a map of non-unital operads $\cQ\to \cQ^\bbo$.

We can apply the above construction to the right $\cQ^\bbo$-module $M$ to obtain a resolution of $M$ which we temporarily denote $\widetilde{M}_\infty$.  The right $\cQ^\bbo$-module $\widetilde{M}_\infty$ is quasi-free, $\widetilde{M}_\infty=\Free_{\cQ^\bbo}(B(M))$. Furthermore $\widetilde{M}_\infty$ is a non-free non-unital right $\cQ$ module.
We define $M_\infty$ to be the quasi-free $\cQ$-submodule generated by $B(M)$, i.e., 
\[
M_\infty = \Free_{\cQ}(B(M))\subset \widetilde{M}_\infty.
\]
Note that $M_\infty \subset \widetilde{M}_\infty$  is indeed closed under the differential and that $M_\infty$ is unital, i.e., $1_\cQ$ acts as the identity.
We have the following canonical maps of $\cQ$-modules.
\[
M_\infty \to \widetilde{M}_\infty \to M.
\]
\begin{lemma}
The map $\widetilde{M}_\infty \to M_\infty$ is a quasi-isomorphism.
\end{lemma}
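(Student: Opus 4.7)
The plan is to prove the stated quasi-isomorphism by two-out-of-three. Lemma~\ref{lem:resolutionqiso}, applied to the augmented operad $\cQ^\bbo$, already gives that the projection $\widetilde{M}_\infty \to M$ is a quasi-isomorphism. The canonical map $\widetilde{M}_\infty \to M_\infty$ is obtained by replacing each $\bbo$-labelled leaf with $1_\cQ$ and composing using the unitality of $\cQ$; it sits in a commutative triangle
\[
\widetilde{M}_\infty \to M_\infty \to M
\]
whose second arrow is the canonical projection sending $M \subset B(M) \to M$ and killing all trees with $\cQ[-1]$-labelled internal nodes. The composite agrees with the quasi-isomorphism above, so by two-out-of-three it suffices to prove that the projection $M_\infty \to M$ is itself a quasi-isomorphism.

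To do this, I would adapt the argument of \cite[Lemma 6.5.14]{lodayval} (which was the engine behind Lemma~\ref{lem:resolutionqiso} in the augmented case). Put a bar-cobar type filtration on $M_\infty$, for instance by the number of internal $\cQ[-1]$-decorated nodes plus the arity of the $\cQ$-layer. The $E^0$ differential retains only the internal differentials of $M$, $\cQ[-1]$, and $\cQ$, and on the $E^1$-page one recognises the standard bar-cobar double complex whose acyclicity outside filtration degree zero is precisely the content of loc.~cit. In each fixed arity the filtration is bounded, so the spectral sequence converges and yields $H(M_\infty) \cong M$, as required.

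The main obstacle is verifying that the transition from $\widetilde{M}_\infty = \Free_{\cQ^\bbo}(B(M))$ to its non-unital cousin $M_\infty = \Free_{\cQ}(B(M))$ is compatible with the acyclicity mechanism. The delicate piece of the differential is the bottom-level edge contraction, which a priori uses operadic composition in $\cQ^\bbo$; one must check that when all of the inputs to a bottommost $\cQ[-1]$-node come from $\cQ$, the composition stays in $\cQ$ and does not produce $\bbo$-terms. This follows because the internal $\cQ[-1]$-nodes represent elements of $\overline{\cQ^\bbo} = \cQ$, and $\cQ$ is a suboperad of $\cQ^\bbo$, so $M_\infty$ is in fact closed under the differential. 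Once this closure is checked, the spectral sequence argument proceeds verbatim and concludes the proof.
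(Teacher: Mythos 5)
Your opening reduction is fine: the triangle $\widetilde{M}_\infty \to M_\infty \to M$ commutes, its composite is the canonical projection, and that projection is a quasi-isomorphism by Lemma \ref{lem:resolutionqiso} applied to the augmented operad $\cQ^\bbo$; so by two-out-of-three it suffices to show that $M_\infty \to M$ is a quasi-isomorphism. The gap is in how you establish this last statement. What you need is acyclicity of the \emph{unreduced} bar--cobar complex, whose internal nodes carry all of $\cQ[-1]$, over an operad $\cQ$ that need not be augmented; this is \emph{not} ``precisely the content'' of \cite[Lemma 6.5.14]{lodayval}, which treats the reduced construction with internal labels in $\bar\cQ[-1]$ and makes essential use of the augmentation. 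Since $\cQ$ carries no augmentation, there is no operadic splitting $\cQ = \K 1_\cQ \oplus \bar\cQ$ with which to normalize away unit labels, so on your $E^1$ page you do not literally ``recognise'' the complex of loc.\ cit.; the acyclicity is true but requires its own argument, e.g.\ the classical extra-degeneracy contracting homotopy built from the unit $1_\cQ$ (absorb the bottom $\cQ$-layer into the internal layer and cap the inputs with units). Two further slips: your convergence claim ``in each fixed arity the filtration is bounded'' fails whenever $\cQ$ has unary or nullary operations (arbitrarily long chains of internal nodes occur in fixed arity) --- convergence still holds, but because the filtration is exhaustive and bounded below, not for the reason you give; and the ``main obstacle'' you devote your last paragraph to, namely closure of $M_\infty \subset \widetilde{M}_\infty$ under the differential, is the easy part (the paper simply notes it), not where the difficulty sits.

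It is worth contrasting this with the paper's proof, which is designed to sidestep exactly the point you left unproved. The paper only ever uses Lemma \ref{lem:resolutionqiso} for the augmented operad $\cQ^\bbo$, and then argues formally with the chain of maps $M \to M_\infty \to \widetilde{M}_\infty \to M_\infty \to M$: the total composite is the identity on $H(M)$, the composite of the last two maps is a homology isomorphism by Lemma \ref{lem:resolutionqiso}, and the composite $M_\infty \to \widetilde{M}_\infty \to M_\infty$ is the identity; a short diagram chase then forces all four maps to be homology isomorphisms, which yields both the present lemma and the quasi-isomorphism $M_\infty \to M$ with no new spectral-sequence work. To repair your route, either supply the unit-based contracting homotopy for the unreduced complex in place of the citation, or adopt the paper's retract argument.
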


\begin{proof}
Note that via the unital operad map $\cQ^\bbo\to \cQ$ we have a natural map
\[
\widetilde{M}_\infty \to M_\infty
\] 
through which the canonical projection $\widetilde{M}_\infty \to M$ factors.
We also have a canonical map of dg $\bbS$-modules $M\to M_\infty$ sending an element $m\in M(n)$ to the two level tree with root node decorated by $m$ and all leaf nodes decorated by $1_\cQ$. We hence have the following maps of dg $\bbS$-modules:
\[
M\to M_\infty \to \widetilde{M}_\infty \to M_\infty \to M
\] 
inducing maps on homology
\[
H(M)\to H(M_\infty) \to H(\widetilde{M}_\infty) \to H(M_\infty) \to H(M).
\]
The composition of all four maps is the identity on $H(M)$ by construction.
The composition of the last two maps is an isomorphism by Lemma \ref{lem:resolutionqiso}, and hence so must be the composition of the first two. It follows that the second map is surjective and the third injective. But the composition of the second and third maps is the identity on $H(M_\infty)$, and hence the second map must also be injective and the third surjective. Hence all four maps above are isomorphisms.
\end{proof}

\begin{remark}
Note that the resolution $M_\infty$ in the non-augmented case is in fact defined by the same construction as in the augmented case, except that the inner nodes of the trees above are decorated by $\cQ[-1]$ instead of $\bar \cQ[-1]$. 
\end{remark}

\subsection{Lifting property}
The results of this section follow from standard model categorial argument. We will nevertheless spell them out for the sake of completeness.
\begin{lemma}[Lifting property]\label{lem:lifting}
Let $\cQ$ be an operad and $N$ be a quasi-free right $\cQ$-module, $N=\Free_{\cQ}(V)$.
Assume that the generating $\bbS$-module $V$ comes with an exhaustive filtration 
\[
 0=\mF^0V\subset \mF^1V\subset \mF^2V\subset \cdots
\]
such that $d_N \mF^j V\subset \Free_{\cQ}(\mF^{j-1}V) \subset N$. Then $N$ has the left lifting property with respect to surjective quasi-isomorphisms of $\cQ$-modules $f:A\to B$, i.e., given a morphism $g\colon N \to B$, the dashed arrow in diagrams of the following form exists.
\[
 \begin{tikzcd}
  \phantom{A}  & A \arrow[two heads]{d}{f}\\
   N \arrow[dashed]{ur}{s} \arrow{r}{g} & B \\
 \end{tikzcd}
\]

\end{lemma}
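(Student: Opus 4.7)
The plan is a standard obstruction-theoretic induction along the filtration $\{\mF^j V\}$. The essential input is that since $f\colon A\to B$ is surjective and a quasi-isomorphism, its kernel $K = \ker f$ is acyclic; this I would deduce from the long exact sequence in homology associated to the short exact sequence $0 \to K \to A \to B \to 0$.

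I would then construct a compatible family of dg right $\cQ$-module lifts $s_j \colon \Free_\cQ(\mF^j V) \to A$ satisfying $f\circ s_j = g|_{\Free_\cQ(\mF^j V)}$, beginning with the trivial map $s_0 = 0$. Because $N$ is free as a right $\cQ$-module and each $\Free_\cQ(\mF^j V)$ is a sub-$\cQ$-module, the extension $s_j$ is determined, as a map of right $\cQ$-modules, by its values on a fixed basis of a complement $W_j$ of $\mF^{j-1}V$ inside $\mF^j V$; compatibility with the differential then needs to be checked only on these generators, thanks to the Leibniz rule governing the differential on the free module.

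For the inductive step, fix a homogeneous $v \in W_j$. The hypothesis on the filtration guarantees $d_N v \in \Free_\cQ(\mF^{j-1}V)$, so $s_{j-1}(d_N v)$ is already defined. Using surjectivity of $f$, pick some preimage $a \in A$ of $g(v)$. A short computation using $f\circ s_{j-1} = g$ and $d^2 = 0$ shows that $d_A a - s_{j-1}(d_N v)$ is a cycle lying in $K$. Acyclicity of $K$ then yields $k \in K$ with $d_A k = d_A a - s_{j-1}(d_N v)$, and I would set $s_j(v) := a - k$. By design, $f(s_j(v)) = g(v)$ and $d_A s_j(v) = s_{j-1}(d_N v)$, which is exactly what is needed for compatibility with the differential once $s_j$ is extended by right $\cQ$-linearity.

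Exhaustivity of the filtration assembles the $s_j$ into a global lift $s \colon N \to A$. The only genuinely non-formal input is the acyclicity of $K$, which requires both the surjectivity and the quasi-isomorphism assumptions on $f$; everything else follows the familiar pattern of lifting relative cell complexes against acyclic fibrations in a model category. I do not anticipate a serious obstacle, since the filtration hypothesis was tailored precisely to make the inductive obstruction class live in $K$ rather than in $A$ itself.
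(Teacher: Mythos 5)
Your proposal is correct, and it shares the paper's overall skeleton: induct over the filtration, use the hypothesis $d_N\mF^jV\subset\Free_\cQ(\mF^{j-1}V)$ so that the obstruction at a generator is already defined, construct the lift on generators, and extend by right $\cQ$-linearity. Where you genuinely diverge is in the mechanics of the inductive step. The paper chooses decompositions $A=H_A\oplus C\oplus C[-1]$ and $B=H_B\oplus D\oplus D[-1]$ with $C=\im d_A$, $D=\im d_B$, arranged so that $f$ restricts to an isomorphism $H_A\to H_B$ and a surjection onto the acyclic part, and then solves explicitly for the three components of $s(v)$; you instead note that $K=\ker f$ is acyclic (long exact sequence for $0\to K\to A\to B\to 0$) and kill the obstruction cycle $d_Aa-s_{j-1}(d_Nv)\in K$ there, setting $s_j(v)=a-k$. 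Your route is arguably cleaner: it avoids the somewhat delicate (and, as printed, slightly garbled) choice of compatible splittings, and the check that the obstruction is a cycle lying in $K$ is a two-line computation from $f\circ s_{j-1}=g$, the chain-map property of $g$, and $d_N^2=0$. What the paper's explicit splitting buys is a concrete parametrization of the possible values of $s(v)$, which is invoked later (proof of Lemma \ref{iffree}) to normalize the lift so that $g'(f(1_\cQ))=1_\cQ$; your construction permits the same normalization by simply taking the evident preimage at that generator. One point you share with the paper but should make explicit: the choices of the preimage $a$ and of the bounding element $k$ must be made $\bbS$-equivariantly on $V$ (your phrase ``values on a fixed basis'' glosses over this); in characteristic zero this is harmless, e.g.\ by averaging over the symmetric group actions, but it deserves a sentence.
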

\begin{proof}
It is a relative standard cofibrancy proof. 
Given a lifting problem as above, let us construct a lift $s\colon N\to A$, of the map $f$,  commuting with the differential right $\cQ$-module structure.
By assumption $N$ is a quasi-free right $\cQ$-module generated by the $\mathbb S$-module $V$.
We construct $s$ inductively using the filtration on $V$ and at each step we check that for $v\in \mF^k V$ we have $s(d_N v) = d_Fs(v)$ and $p_1\circ s (v) = v$.

For $v\in \mF^1V$ define $s(v)$ to be a (closed) pre-image of $v$.

Let us suppose that $s$ is constructed up until degree $p-1$ and let $v \in \mF^pV$.

Since $d_Nv \in \Free_\cQ (\mF^{p-1} V)$, $s(d_Nv)$ is already defined and it is clearly in the kernel of $d_A$, therefore it represents a homology class $[s(d_Nv)]$.
By induction hypothesis $[f]([s(d_Nv)]) =[d_N g (v)] = [0]$, so, since $f$ is a quasi-isomorphism $s(d_Nv)$ is exact.

$A$ and $N$ can be decomposed as dg vector spaces in $A=H_A \oplus C \oplus C[-1]$ and $B=H_B \oplus D \oplus D[-1]$ in such a way that $C=\im d_A$, $H_A\cong H(A)$, $D=\im d_B$ and $H_B\cong H(B)$ such that under this identification $d_A$ is the  identity map from $C[-1]$ to $C$ and $d_N$ is the identity map from $C[-1]$ to $C$.
Moreover, since $f$ is a surjective quasi-isomorphism, the decomposition can be made in such a way that $f$ restricts to an isomorphism $f\big|_{H_A}\colon   H_A\to H_B$ and $f$ restricts to a surjective map $f\big|_C\colon C\to D$.

We wish to show that we can choose $s(v)$ such that we have simultaneously $s(d_Av)=d_Bs(v)$ and $f\circ s(v)=v$. 

Let us decompose $g(v) = h_B + d_Bd + d' \in H_B \oplus D \oplus D[-1]$ and let us decompose also $s(v) = h_A + d_Ac + c' \in H_A \oplus C \oplus C[-1]$ into the 3 unknown summands that we wish to find.

Since  $s(d_Nv)$ is exact, let us define $c'$ such that $s(d_Nv) = d_Ac'$. This not only guarantees that $s(d_Nv)=d_As(v)$ but also tells us that the projections of $d'$ and $f(c')$ to $D[-1]$ are the same, by taking $f$ on both sides of the equality.

 To solve $f\circ s(v)=v$, we notice that this is equivalent to 
$$f(h_A) + f(d_Ac) = h_B + d_Bd + (d'-f(c')),$$
but since the right-hand side of the equation is in $H_B \oplus C$ and both ${f}\big|_{H_A} $ and $f\big|_{D}$ are surjective, $h_A$ and $d_Ac$ can be chosen such that the equality holds.

Continuing the induction we obtain the desired lift $s$.
\end{proof}

The following result is an (almost) immediate Corollary of the previous Lemma, together with the standard ``surjective trick''.
\begin{lemma}\label{iffree}
Let $\cQ$ and $N$ be as in  be as in Lemma \ref{lem:lifting}. Let $f\colon \cQ\to N$ be a quasi-isomorphism of right $\cQ$-modules such that $f(1_{\cQ})\in V$. Then there exists a map of right dg $\cQ$-modules $g\colon N \to \cQ$ such that $g\circ f =\id_\cQ$.
\end{lemma}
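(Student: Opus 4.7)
The plan is to construct $g$ directly by induction on the filtration $0 = \mF^0 V \subset \mF^1 V \subset \cdots$ of the generating $\bbS$-module, carrying along an auxiliary right $\cQ$-linear chain homotopy $h \colon N \to N$ of degree $+1$ that witnesses $f \circ g \simeq \id_N$. The ``surjective trick'' enters through the need to check, at each stage, that a certain cycle in $\cQ$ is exact: this is verified by applying $f$ and using the homotopy $h$, together with the fact that $[f]$ is an isomorphism (in particular, surjective on homology).

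First I would reduce to the case where $v_0 := f(1_\cQ) \in \mF^1 V$ by refining the filtration if necessary; this is compatible with the condition $d_N \mF^1 V \subset \Free_{\cQ}(\mF^0 V) = 0$ since $v_0$ is closed. Both $g$ and $h$ are to be specified by their values on $V$ and extended $\cQ$-equivariantly. The inductive hypothesis at level $k$ is that on $\Free_{\cQ}(\mF^k V)$, the map $g$ is a chain map with $g(v_0) = 1_\cQ$ and
\[
f \circ g - \iota \;=\; d_N \circ h + h \circ d_N,
\]
where $\iota$ is the inclusion into $N$.

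For the base case $k=1$, set $\tilde g(v_0) = 1_\cQ$ and $\tilde h(v_0) = 0$; for each other generator $v \in \mF^1 V$ (automatically $d_N$-closed), choose $\tilde g(v) \in \cQ$ to be a cycle in the class $[f]^{-1}[v] \in H(\cQ)$, and pick $\tilde h(v)$ with $d_N \tilde h(v) = f(\tilde g(v)) - v$. For the inductive step, given a new generator $v \in \mF^k V$, the element $g(d_N v) \in \cQ$ is a cycle (by $g$ being a chain map on $\Free_{\cQ}(\mF^{k-1} V)$), and applying $f$ and the homotopy identity to $d_N v$ yields
\[
f(g(d_N v)) = d_N v + d_N h(d_N v),
\]
which is exact in $N$; since $[f]$ is an isomorphism, $g(d_N v)$ is exact in $\cQ$, so I pick a primitive $\tilde g(v)$. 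After possibly adjusting $\tilde g(v)$ by a cycle of $\cQ$ (chosen via surjectivity of $[f]$) so that $f(\tilde g(v)) - v - h(d_N v)$ (one checks it is a cycle of $N$) becomes exact, choose $\tilde h(v)$ to be a corresponding primitive in $N$.

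The resulting $g$ is a right $\cQ$-linear chain map with $g(v_0) = 1_\cQ$, so that $g \circ f(c) = g(v_0 \circ c) = g(v_0) \circ c = c$ for all $c \in \cQ$. The main technical point is the simultaneous maintenance of both invariants through the induction: the auxiliary homotopy $h$ is indispensable, since without it one cannot verify that $g(d_N v)$ is a boundary at each inductive step.
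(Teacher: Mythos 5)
Your proof is correct, but it takes a genuinely different route from the paper. The paper's ``surjective trick'' is not surjectivity of $[f]$ on homology: it is the replacement of the non-surjective quasi-isomorphism $f$ by the surjective quasi-isomorphism $p_1\colon F=\cQ\oplus N\oplus N[1]\twoheadrightarrow N$, $(q,n,n')\mapsto f(q)+n$, of right $\cQ$-modules; one then applies Lemma \ref{lem:lifting} to lift $\id_N$ through $p_1$ (arranging $g'(f(1_\cQ))=1_\cQ$ by inspecting that proof) and sets $g=p_2\circ g'$ with $p_2\colon F\to\cQ$ the projection, concluding as you do from $g(f(1_\cQ))=1_\cQ$ and $\cQ$ being generated by $1_\cQ$. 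You instead run the obstruction-theoretic induction over the filtration of $V$ directly against the non-surjective $f$, at the price of carrying the right $\cQ$-linear homotopy $h$ with $f\circ g-\iota=d_Nh+hd_N$; your bookkeeping is sound (the obstruction cycle $g(d_Nv)$ is killed by injectivity of $[f]$ via the homotopy identity, and the correction of $\tilde g(v)$ by a cycle uses surjectivity of $[f]$, exactly as you say), and both the chain-map condition and the homotopy identity only need to be checked on generators since both sides are right $\cQ$-linear. What each approach buys: the paper's argument is shorter because it recycles Lemma \ref{lem:lifting} and the mapping-cylinder $F$, avoiding any homotopy; yours is self-contained, does not need $F$ or the lifting lemma at all, and makes the normalization $g(v_0)=1_\cQ$ manifest from the base case rather than requiring a glance back into the proof of the lifting lemma. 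Two minor points to tidy: the choices of primitives and of cycles representing $[f]^{-1}[v]$ should be made $\bbS$-equivariantly (available in characteristic zero by averaging or by choosing $\bbS$-module splittings --- the paper is equally informal here), and the refinement placing $v_0=f(1_\cQ)$ in $\mF^1V$ is legitimate precisely because $v_0$ is closed, which you should note follows from $f$ being a chain map.
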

The proof will show that $f$ is a cofibration, from which the statement easily follows.
\begin{proof}
Consider the right $\cQ$-module $F = \cQ \oplus N \oplus N[1]$ with differential $d_F$ equal to the sum of the given differentials plus one extra piece that acts as the identity from $N$ to $N[1]$.
It comes with a surjective quasi-isomorphism $p_1\colon F\twoheadrightarrow N$ sending $(q,n,n')\in \cQ\oplus N \oplus N[1]$ to $f(q)+n$. Furthermore, one has the natural projection (quasi-isomorphism of $\cQ$-modules) $p_2\colon F\to \cQ$.

We apply Lemma \ref{lem:lifting} to the lifting problem 
\[
 \begin{tikzcd}
  \phantom{A}  & F \arrow[two heads]{d}{p_1}\\
   N \arrow[dashed]{ur}{g'} \arrow{r}{=} & N \\
 \end{tikzcd}
\]
to construct a map $g':N\to F$. In fact, looking at the proof of Lemma \ref{lem:lifting} we may choose $g'$ such that $g'(f(1_{\cQ}))=1_{\cQ}\in \cQ \subset F$.
The composition $g:=p_2\circ g'$ satisfies 
\[
 g(f(1_{\cQ}))=p_2(g'(f(1_{\cQ}))=1_{\cQ}
\]
and hence $g\circ f =\id_\cQ$ since $\cQ$ is generated by $1_{\cQ}$ as right $\cQ$-module.
\end{proof}

\begin{remark}\label{rem:Minftycofibrant}
We note that the right $\cQ$-module $N=M_\infty$ satisfies the conditions of Lemma \ref{lem:lifting}.
 Indeed, we may define the required filtration on the generating $\bbS$-module $B(M)$ as follows.
 Let $d_{int}$ be the piece of the differential on $M_\infty$ that stems from the internal differentials on $\cM$ and $\cQ$, then:
 \begin{align*}
  \mF^{2p-1}B(M) &= \{\text{span of trees with $<p$ inner nodes}\} \oplus \\ &\oplus \{\text{linear combinations of trees with $=p$ inner nodes closed under $d_{int}$}\} \\
  \mF^{2p}B(M) &= \{\text{span of trees with $\leq p$ inner nodes}\} \\
 \end{align*}
\end{remark}

\subsection{Proof of Proposition \ref{prop:resolution}}

We first claim that $M_\infty$ is an operadic $\cP$-$\cQ$-torsor. Indeed, denote the element $\mathbf 1\in M(1)$, whose existence is guaranteed by Definition \ref{def:optorsor}, temporarily by $\mathbf 1_M$. Since $M_\infty\to M$ is a quasi-isomorphism we may pick some degree zero cycle $\mathbf 1_{M_\infty}\in M$ lifting $\mathbf 1_M$. We need to check that the induced maps \eqref{equ:modmaps} (for $M$ replaced by $M_\infty$ and $\mathbf 1$ by $\mathbf 1_{M_\infty}$) are quasi-isomorphisms. But in the following commutative diagram of $\bbS$-modules
\[
 \begin{tikzcd}
  \cP \arrow{r}{} \arrow{d}{\id}& M_\infty \arrow{d}{}& \cQ \arrow{l}{} \arrow{d}{\id}\\
  \cP \arrow{r}{} & M & \cQ \arrow{l}{}\\
 \end{tikzcd}
\]
all arrows except the upper horizontal are known to be quasi-isomorphisms, and hence so have to be the upper horizontal arrows.

The existence of the map $\mu$ in Proposition \ref{prop:resolution} is a direct consequence of Lemma \ref{iffree} together with Remark \ref{rem:Minftycofibrant}.
\hfill \qed

\section{Application: Deligne's Conjecture} \label{sec:deligne}

As a consequence of Theorem \ref{thm:main} we may present a very short and elegant proof of the following conjecture due to P. Deligne \cite{Deligne}.

\begin{theorem}[Deligne's Conjecture]
For any associative algebra $A$, there exists a natural action of the chains operad of the Little Disks Operad on the Hochschild complex $C^\bullet (A,A)$, such that the induced action of the Homology of the Little Disks Operad on the Hochschild Homology of $A$ corresponds to its standard Gerstenhaber algebra structure.
\end{theorem}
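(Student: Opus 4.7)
The plan is to exhibit an operadic $\Br$-$C_\bullet(\FM_2)$ quasi-torsor and transport the known action of the brace operad $\Br$ on the Hochschild complex across the resulting quasi-isomorphism of operads. Recall that for any associative algebra $A$ the brace operations on multilinear Hochschild cochains together with the cup product assemble into a natural operad morphism $\Br \to \End(C^\bullet(A,A))$; this is the starting point of the argument.

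The geometric heart of the argument is the bimodule. Let $\Cyl$ denote the cellular chain complex of the Fulton--MacPherson compactification of the configuration space of points in the closed upper half-plane $\R \times \R_{\geq 0}$, with at least one point required to lie in the interior. This $\bbS$-module carries a right action of $C_\bullet(\FM_2)$ by insertion of $\R^2$-configurations at an interior point, and a left action of $\Br$ by stacking brace-tree configurations along the boundary $\R \times \{0\}$. The distinguished element $\mathbf 1 \in \Cyl(1)$ is the class of a single interior point. The two canonical maps from \eqref{equ:modmaps},
\[
 \Br(n) \to \Cyl(n), \qquad C_\bullet(\FM_2)(n) \to \Cyl(n),
\]
realize respectively the deformation retraction of $\Cyl(n)$ onto the boundary stratum where all points collapse to the real line (a cellular model of the brace operad) and the retraction onto a single interior configuration (recovering $C_\bullet(\FM_2)$); both are quasi-isomorphisms. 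Hence $\Cyl$ is a $\Br$-$C_\bullet(\FM_2)$ quasi-torsor.

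Theorem \ref{thm:main} then yields a quasi-isomorphism of operads $\Br \simeq C_\bullet(\FM_2)$, and Theorem \ref{thm:uniqueness} upgrades this to a zig-zag of quasi-isomorphisms of bimodules through which the $\Br$-action on $C^\bullet(A,A)$ transports to a natural action of the chain operad of the little disks. On cohomology the induced structure coincides with the standard Gerstenhaber algebra structure on $HH^\bullet(A,A)$, since the cup product and the Gerstenhaber bracket are, by construction of the brace action, induced by precisely those cellular generators of $\Br$ in arity $2$ that correspond under $H(\Br) \to H(\FM_2) = \mathrm{Ger}$ to the Arnold generators.

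The main obstacle is establishing that $\Cyl$ really is a bimodule of the required form, i.e.\ analyzing the boundary stratification of the compactification and identifying the two natural retracts of $\Cyl(n)$ with cellular models of $\Br$ and $C_\bullet(\FM_2)$ respectively; this is where all the geometric content sits, while once the quasi-torsor is in hand the remainder of the proof is formal thanks to Theorem \ref{thm:main}.
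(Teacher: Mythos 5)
Your proposal follows essentially the same route as the paper: exhibit the chains of the compactified configuration spaces of points in the upper half-plane as a $\Br$--$C(\FM_2)$ quasi-torsor and then invoke Theorem \ref{thm:main} to transport the brace-operad action on $C^\bullet(A,A)$ to an action of chains of little disks. The geometric input you flag as ``the main obstacle'' (the bimodule structure and the quasi-torsor property, which the paper deduces from the identification of the homology bimodule with the canonical $\e_2$-$\e_2$-bimodule) is precisely Proposition \ref{prop:bimodstruct}, i.e.\ Proposition 2 of \cite{brformality}, which the paper cites rather than reproves; beyond that, only Theorem \ref{thm:main} is needed, not Theorem \ref{thm:uniqueness}.
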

It is well known that the  Hochschild complex $C^\bullet (A,A)$ carries a natural action of the braces operad $\Br$, see \cite{Baetal} for details.
Furthermore, we may take as a model of the little disks operad the operad of (semi-algebraic) chains $C(\FM_2)$ of the Fulton-MacPherson operad $\FM_2$, cf. \cite{KS1, GJ, HLTV}.
Hence to show the Deligne conjecture it suffices to produce a (suitable) map $C(\FM_2)\to \Br$.
Let $C_{m,n}$ be the configuration space of $m$ points in the upper half-plane and $n$ points on the boundary, suitable compactified so that the spaces $\FM_2(n)$ and $C_{m,n}$ together assemble to form a colored operad, modelling the Swiss Cheese operad.
The following result has been shown in \cite{brformality}.
\begin{proposition}[Proposition 2 of \cite{brformality}]\label{prop:bimodstruct}
The semi-algebraic chains $C(C_{\bullet,0})$ carry a natural $\Br-C(\FM_2)$-bimodule structure, such that the induced $H(\Br)-H(C(\FM_2))$(i.e., the $\e_2$-$\e_2$-)bimodule structure on $H(C_{\bullet,0})\cong \e_2$ is the canonical one.
\end{proposition}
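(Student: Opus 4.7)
The plan is to construct the left and right actions separately, verify their compatibility with each other and with the differentials, and then identify the induced action on homology. The right action of $C(\FM_2)$ is essentially free: the Swiss Cheese colored operad structure provides, at the level of spaces, a right $\FM_2$-module structure on $C_{\bullet,0}$ by operadic insertion of $\FM_2$-configurations at the interior points of a configuration in $C_{m,0}$. Applying the lax symmetric monoidal functor of semi-algebraic chains transports this to the desired dg right $C(\FM_2)$-module structure.

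The left action of the braces operad is the main content. I would exploit the combinatorial stratification of the Fulton--MacPherson compactification of $C_{n,0}$: the top-dimensional boundary strata in which the interior points escape to the real line in a prescribed nested fashion are indexed exactly by brace trees on $n$ leaves. For each brace tree $T$ of arity $k$ one writes down a semi-algebraic chain in $C_{n_1 + \cdots + n_k, 0}$ that glues $k$ configurations $c_i \in C_{n_i,0}$ at the nodes of $T$, realized by rescaling them to sit in concentric shrinking neighbourhoods near the boundary according to the nesting prescribed by $T$. Extending linearly yields the map $\Br \circ C(C_{\bullet,0}) \to C(C_{\bullet,0})$, and one checks that operadic composition of brace trees matches composition of these gluings up to reparametrization, hence defines a left module structure.

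The hard part will be verifying that this brace action commutes with the differential on semi-algebraic chains. This requires identifying the $\Br$-differential (the combinatorial one on brace trees) with the boundary of the constructed configuration chains, which in turn depends on a careful matching between the codimension-$1$ boundary strata of $C_{n,0}$ and the brace-tree combinatorics, together with sign-tracking in the semi-algebraic framework of \cite{HLTV}. Commutativity of the left and right actions is comparatively easier: the $\Br$-action concentrates the inserted configurations near the boundary while the $\FM_2$-action inserts into the interior, so the two happen in essentially disjoint regions of the configuration and one merely has to confirm that the two procedures can be separated by a homotopy of semi-algebraic chains.

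Finally, for the induced action on homology, one observes that $H(\Br) \cong H(C(\FM_2)) \cong \e_2$ and $H(C_{\bullet,0}) \cong \e_2$ as an $\bbS$-module, and that $\e_2$ is quadratic, hence generated in arity $2$. It therefore suffices to check that the arity-$2$ generators, the commutative product and the Gerstenhaber bracket, act on $H(C_{\bullet,0})$ as the standard multiplication and bracket of the self-bimodule $\e_2$. This is a direct computation on low-degree chains: the fundamental class of a boundary segment between two real points produces the product, while that of a small half-circle linking two points in the upper half-plane produces the bracket, exactly matching the canonical $\e_2$-$\e_2$-bimodule structure.
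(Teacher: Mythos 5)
First, a point of comparison: this paper does not prove Proposition \ref{prop:bimodstruct} at all --- it is imported verbatim as Proposition 2 of \cite{brformality}, so there is no internal proof to measure your argument against; what you have written is, in outline, a reconstruction of the argument of the cited reference (which builds on \cite{KS1}). Your route is indeed the same as the one taken there: the right action comes from insertion of $\FM_2$-configurations at interior points via the Swiss-Cheese-type colored operad structure on the compactified configuration spaces, transported through the monoidal semi-algebraic chains functor of \cite{HLTV}; the left action assigns to each brace tree an explicit semi-algebraic chain supported on (closures of) boundary strata where points approach the real line in nested clusters, into which the arguments are inserted; and the homology statement is checked on the arity-2 generators of $\e_2$, which is legitimate since an operadic left (or right) action is determined by the action of operadic generators together with the module associativity axioms.

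Two substantive objections, though. (1) You propose to verify the interchange of the two actions only ``up to a homotopy of semi-algebraic chains'', and the left-module axiom only ``up to reparametrization''. That is not enough: Definition \ref{def:optorsor} and Theorems \ref{thm:main} and \ref{thm:uniqueness} require a strict dg bimodule, so associativity and the compatibility of the left and right actions must hold on the nose. In the actual construction they do, because both actions are induced by the strictly associative composition maps of the compactified configuration spaces (insertion at interior points for $C(\FM_2)$, insertion at the boundary/cluster slots for the chains realizing brace trees), and strictness is preserved by the chains functor; if the identities only held up to homotopy you would be forced into an $\infty$-bimodule framework and the torsor argument of this paper would not apply as stated. (2) The step you yourself flag as ``the hard part'' --- that the combinatorial differential of $\Br$ matches the semi-algebraic boundary of the associated chains, with no contributions from unwanted boundary strata and with consistent degrees and signs --- is precisely where essentially all of the work of \cite{brformality} lies; your proposal names this verification but does not carry it out, so as written it is a plan rather than a proof. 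A minor correction in the same vein: the strata realizing brace trees occur in all codimensions (matching the degrees of the trees), not only as top-dimensional boundary strata.
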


\begin{proof}[Proof of Deligne's conjecture]
Proposition \ref{prop:bimodstruct} states in particular that $C(C_{\bullet,0})$ is an operadic $\Br-C(\FM_2)$-quasi-torsor.
In view of Theorem \ref{thm:main} the Deligne conjecture follows.
%
\end{proof}

\begin{remark}
The Deligne conjecture has seen various proofs in recent years, including those by McClure and Smith \cite{M-Smith}, Kontsevich and Soibelman \cite{KS1}, Tamarkin \cite{Dima-another} and others. The above proof is however the shortest and most natural we know, even if one takes into account the definition of the bimodule structure of Proposition \ref{prop:bimodstruct} in \cite{brformality}.
\end{remark}

\section{Application: Homotopy Braces Formality Theorem} \label{sec:brformality}
M. Kontsevich's formality Theorem is deformation quantization \cite{Ko} states that there is an $L_\infty$ quasi-isomorphism between the multi vector fields and multi differential operators on a smooth manifold
\[
T_{\rm poly}[1]\to D_{\rm poly}[1].
\]
On the right hand side there is furthermore a natural action of the braces operad. The homotopy braces formality Theorem proven by the second author \cite{brformality} states that M. Kontsevich's $L_\infty$-morphism may be extended to a homotopy braces morphism. 
One of the main steps in the proof is the construction of a quasi-isomorphism of operads and bimodules (cf. Proposition \ref{prop:bimodstruct})
\begin{equation}\label{equ:tobefound}
\begin{tikzcd}[column sep=0.5em]				
\Br_\infty \arrow{d}{} &\aol &\Br_\infty^{\text{bimod}} \arrow{d}{} & \aor &\Br_\infty \arrow{d}{} \\
\Br & \aol  & C(C_{\bullet,0})  &\aor & C(\FM_2) 
\end{tikzcd}
\end{equation}
where $\Br_\infty$ is a cofibrant resolution of the braces operad $\Br$ and $\Br_\infty^{\text{bimod}}$ is a cofibrant resolution of the $\Br$-$\Br$ operadic bimodule $\Br$.

This construction takes a significant amount of space and effort in \cite{brformality}. Given Theorem \ref{thm:uniqueness} it can be cut short to a few lines. First, since $C(C_{\bullet,0})$ is an operadic torsor by Proposition \ref{prop:bimodstruct}, we may use Theorem \ref{thm:uniqueness} to connect it by a zig-zag of quasi-ismorphisms of operads and bimodules to $\Br\aol\Br\aor\Br$. But one can continue the zig-zag as follows
$$
\begin{tikzcd}[column sep=0.5em]
\Br_\infty \arrow{d}{} &\aol & \Br_\infty^{\text{bimod}}  \arrow{d}{} & \aor &\Br_\infty  \arrow{d}{} \\
\Br \arrow{d}{} & \aol& \Br\arrow{d}{}& \aor& \Br\arrow{d}{} \\
\cdots & \aol& \cdots & \aor& \cdots \\
\Br \arrow{u}{}& \aol  & C(C_{\bullet,0}) \arrow{u}{}  &\aor & C(\FM_2) \arrow{u}{} .
\end{tikzcd}
$$
The upper line is quasi-free and by lifting (up to homotopy) across the zig-zag we may find the desired quasi-isomorphism \eqref{equ:tobefound}. 

More details can be found in the thesis of the first author \cite{RicardoThesis}, where the above trick is used to generalize the result of \cite{brformality} from the Hochschild to the cyclic setting.

\end{document}